\newtheorem*{notn}{Notations}
\newtheorem{thm}{Theorem}[section]
\newtheorem{Def}[thm]{Definition}
\newtheorem{Rem}[thm]{Remark}
\newtheorem{Cor}[thm]{Corollary}
\newtheorem{Pro}[thm]{Proposition}
\newtheorem{ex}[thm]{Example}
\newtheorem{prob}[thm]{Problem}
\newtheorem{lemma}[thm]{Lemma}
\newcommand{\bdfn}{\begin{Def} \rm}
\newcommand{\edfn}{\end{Def}}
\newcommand{\tfae}{the following are equivalent}
\newcommand{\ra}{\rightarrow}
\newcommand{\Ra}{\Rightarrow}
\newcommand{\ci}{\subseteq}
\newcommand{\al}{\alpha}
\newcommand{\de}{\delta}
\newcommand{\e}{\varepsilon}
\newcommand{\mb}{\mathbb}
\newcommand{\mc}{\mathcal}
\newcommand{\sm}{\setminus}
\newcommand{\iy}{\infty}
\newcommand{\beqa}{\begin{eqnarray*}}
\newcommand{\eeqa}{\end{eqnarray*}}
\newcommand{\vertiii}[1]{{\left\vert\kern-0.25ex\left\vert\kern-0.25ex\left\vert #1 
    \right\vert\kern-0.25ex\right\vert\kern-0.25ex\right\vert}}
\newcounter{cnt1}
\newcounter{cnt2}
\newcounter{cnt3}
\newcounter{cnt4}
\newcommand{\blr}{\begin{list}{$($\roman{cnt1}$)$} {\usecounter{cnt1}
\setlength{\topsep}{0pt} \setlength{\itemsep}{0pt}}}
\newcommand{\blR}{\begin{list}{\Roman{cnt4}.\ } {\usecounter{cnt4}
\setlength{\topsep}{0pt} \setlength{\itemsep}{0pt}}}
\newcommand{\bla}{\begin{list}{$(\alph{cnt2})$} {\usecounter{cnt2}
\setlength{\topsep}{0pt} \setlength{\itemsep}{0pt}}}
\newcommand{\bln}{\begin{list}{$($\arabic{cnt3}$)$} {\usecounter{cnt3}
\setlength{\topsep}{0pt} \setlength{\itemsep}{0pt}}}
\newcommand{\el}{\end{list}}
\begin{document}
\title[Uniqueness of Hahn-Banach extensions]{UNIQUENESS OF HAHN-BANACH EXTENSIONS IN LOCALLY CONVEX SPACES}
\author{Sainik Karak
  \and
  Akshay Kumar
  \and
  Tanmoy Paul
}
\newcommand{\Addresses}{{% additional braces for segregating \footnotesize
  \bigskip
  \footnotesize

  Sainik Karak, \textsc{Indian Institute of Technology Hyderabad,
    India}\par\nopagebreak
  \textit{E-mail address}, Sainik Karak: \texttt{ma22resch11001@iith.ac.in}

  \medskip

  Akshay Kumar, \textsc{Vellore Institute of Technology, Amaravathi Andhra Pradesh, India}\par\nopagebreak
   \textit{E-mail address}, Akshay Kumar: \texttt{akshay.kumar@vitap.ac.in}

  \medskip

  Tanmoy Paul, \textsc{Indian Institute of Technology Hyderabad,
    India}\par\nopagebreak
  \textit{E-mail address}, Tanmoy Paul: \texttt{tanmoy@math.iith.ac.in}
}}

\subjclass[2000]{Primary 46A22, 46A03, 46E10 Secondary 46A99, 46N99 \hfill \textbf{\today} }
\keywords{property-$U$, Hahn-Banach extension, 
	Locally convex space, semi-norm, topology of pointwise convergence}

\begin{abstract}
We intend to study the uniqueness of the Hahn-Banach extensions of linear functionals on a subspace in locally convex spaces. Various characterizations are derived when a subspace $Y$ has an analogous version of property-U (introduced by Phelps) in a locally convex space, referred to as the property-SNP. We characterize spaces where every subspace has this property.
It is demonstrated that a subspace $M$ of a Banach space $E$ has property-U if and only if the subspace $M$ of the locally convex space $E$ endowed with the weak topology has the property-SNP, mentioned above. This investigation circles around exploring the potential connections between the family of seminorms and the unique extension of functionals previously mentioned. We extensively studied this property on the spaces of continuous functions on Tychonoff spaces endowed with the topology of pointwise convergence.
\end{abstract}

\maketitle

\section{Introduction and Preliminaries}	

\subsection{Introduction}
The uniqueness of the norm-preserving extensions of a continuous linear functional on a subspace of a Banach space is a long-standing and well-studied topic in the literature of Banach spaces. A subspace $M$ of a Banach space $E$ is said to have the {\it property-U} in $E$ if each continuous linear functional on $M$ has a unique norm preserving extension to $E$. Before the seminal study of this property by Phelps in \cite{RP}, Taylor (in \cite{AT}) and Foguel (in \cite{SF}) characterized Banach spaces in which all subspaces possess the property-U. Numerous mathematicians have made a significant contribution to this topic in the last few decades. We refer the reader to \cite{AL,EO,SS} for further details on this topic.

The Hahn-Banach extension theorem for locally convex spaces (in short LCS) is well known (see \cite[Theorem~3.2]{WR}). From the proof of the aforementioned theorem, it clearly follows that not all dominant sublinear functionals ensure a unique Hahn-Banach extension of a given $f$ from the dual of the subspace. A point of departure from this observation led us to study the cases where a subspace $Y$ of an LCS $X$ exhibits a unique Hahn-Banach extension for all continuous linear functionals in $Y^*$.
This investigation answers the following problems. For any unexplained notation, we refer to the notations in Section~1.2.

\begin{prob}\label{P1} 
Let $Y$ be a subspace of an LCS $(X, \tau)$, where $\tau$ is generated by a directed family $\mc{P}$ of seminorms. Let $f \in Y^*$ and $|f| \leq \mu$ on $Y$ for some $\mu \in \mathcal{P}$. Is it always possible to find an extension $\widetilde{f}\in X^*$ of $f$ such that $|\widetilde{f}|\leq\mu$ and $\chi_{\mu'}^Y(f)=\chi_{\mu'}^X(\widetilde{f})$ whenever $\mu'\succeq\mu$? What assumption is necessary for the uniqueness of the extension $\widetilde{f}$? Is it possible to maintain the uniqueness of $\widetilde{f}$ regardless of the selections of $\mu'$?
\end{prob}

A careful review of the proof of \cite[Theorem~3.2]{WR} shows that an $f\in Y^*$ has a unique extension $\widetilde{f}$ on $X$ satisfying $\widetilde{f}\leq \mu$ if for all $x\in X\setminus Y$,
\begin{equation}\label{E1}
\sup_{v\in Y}[f(v)-\mu(v+x)] = \inf_{w\in Y}[-f(w)+\mu(w-x)],
\end{equation}
and vice versa. Here $X, Y, \mu, f$ are as stated in Problem~\ref{P1}.

In \cite{RP} Phelps characterized subspaces with property-U in terms of the Haar property of its annihilators in the dual space. Finite dimensional subspaces in a Banach space are also characterized which have property-U.
While studying property-U of a subspace $M$ of a Banach space $E$, Oja and Lima focused on $M^\#$ (see \cite{AL,EO}). In some cases $M^\#$ plays the role of $M^*$. Our observations in section~2 and 3 extends these notions for locally convex spaces. Let $E$ and $M$ be as above, in contrast to the uniqueness phenomenon of extension of $f$, two separate extensions of $f$ over $E$ with the norm $\|f\|+\e$ exist if $\e>0$ and $f\in M^*$, is observed. In fact, Proposition~\ref{P4} ensures a generalized version of this observation in locally convex spaces.  In section~3, we study the property that is analogous to property-U (referred to as {\it property-SNP}) and its uniform version (referred to as {\it property-USNP}) in quotient spaces. We introduce $Y^\#_{lcs}$ for a subspace $Y$ in a locally convex space $X$. We identify different characteristics of the set $Y_{lcs}^\#$ corresponding to a subspace $Y$ in an LCS $X$ compared to $M^\#$ in a Banach space $E$. Several examples are provided at the end that illustrate the behavior of these properties in locally convex spaces.

\subsection{Preliminaries}

We denote an LCS and the topology endowed on it by the tuple $(X, \tau)$. Suppose $\tau$ is induced by a collection $\mc{P}=\{\rho_i:i\in I\}$ of seminorms on $X$. Then a neighborhood base $\mc{N}$ at $0$ for $(X, \tau)$ is the class of all finite intersections of the sets of the form $\{x\in X: \rho_i(x)<\de\}$, where $i\in I$ and $\de>0$.

The dual of $X$ is denoted by $X^*$ and the annihilator of a subspace $Y$ of $X$ in $X^*$ is denoted by $Y^\perp$. By a hyperplane in $X$, we mean a subspace of the form $f^{-1}(0)$ for some $f\in X^*$. For every $\rho\in\mc{P}$, we define $B_X^\rho:=\rho^{-1}([0,1])$. Let $(X,\tau)$ be as stated above, for a finite subset $F\ci \mc{P}$ and a subspace $Y$ of $X$, we define the following.

\begin{notn}
\bla
\item $B_{X,F}^\rho=\{x\in X: x\in B_X^\rho \text{ for all }\rho\in F\}$
\item $\chi_F^X(f)= \sup\{ |f(x)|: x\in B_{X,F}^\rho\}  \text{ for } f\in X^*$. 
\item $\chi_F^Y(h)= \sup\{ |h(y)|: y\in(B_{X,F}^\rho\cap Y) \} \text{ for all } h\in Y^*$.
\el
\end{notn}

If $F=\{\rho\}$, then $\chi_F^X$ is simply denoted by $\chi_\rho^X$. By an extended seminorm on $X$, we mean a function which satisfies all the properties of a seminorm but can attain infinity (see Example \ref{ex1}).
Since $\rho^{-1}([0, 1])$ is absolutely convex for all $\rho\in F$, $\{\chi_F^X:F\ci \mc{P}\}$ is a family of extended seminorms on $X^*$. We refer the reader to \cite{nwiv, esaetvs} for further details on extended seminorms (norms).  

\begin{ex}\label{ex1} 
Suppose $\tau$ on $c_{00}$ is induced by the seminorms $$\rho_k((x_n)):=\sum_{1\leq j\leq k}|x_j|, \text{ for } (x_n)\in c_{00}.$$ Consider the linear map $f$ defined by $f((x_n))=x_2$ for all $(x_n)\in c_{00}$. Since $|f|= \rho_2$, $f\in c_{00}^*$. It is easy to see that $\sup\{|f(x)|: x\in \rho_1^{-1}([0, 1])\}=\iy$. 
\end{ex}

Although, we have the following in an LCS.

\begin{lemma}\label{lem1} Suppose $(X, \tau)$ be an LCS such that $\tau$ is induced by a family $\mc{P}$ of seminorms. Then $f\in X^*$ if and only if there exists a finite set $F \ci \mc{P}$ (depends on $f$) such that $\chi_F^X(f)<\iy$.
\end{lemma}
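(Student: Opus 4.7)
The plan is to use the standard characterization that a linear functional on an LCS is continuous if and only if it is bounded on some neighborhood of $0$, and then translate this between neighborhoods of the form $\{x: \rho(x) < \delta,\ \forall \rho \in F\}$ and the absolutely convex sets $B_{X,F}^{\rho}$.

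For the forward direction, suppose $f \in X^*$. Then $f^{-1}((-1,1))$ is a $\tau$-open neighborhood of $0$, so by the definition of the neighborhood base $\mathcal{N}$ there exist a finite $F \subseteq \mathcal{P}$ and $\delta > 0$ such that $\{x \in X : \rho(x) < \delta \text{ for all } \rho \in F\} \subseteq f^{-1}((-1,1))$. Given any $x \in B_{X,F}^{\rho}$, the element $(\delta/2)x$ lies in this neighborhood (as $\rho((\delta/2)x) \le \delta/2 < \delta$ for every $\rho \in F$), so $|f((\delta/2)x)| < 1$, i.e.\ $|f(x)| < 2/\delta$. Taking the supremum gives $\chi_F^X(f) \le 2/\delta < \infty$.

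For the converse, suppose $F \subseteq \mathcal{P}$ is finite with $\chi_F^X(f) = M < \infty$. Fix $\e > 0$ and put $\delta = \e/(M+1)$. Whenever $\rho(x) < \delta$ for all $\rho \in F$, the scaled vector $x/\delta$ satisfies $\rho(x/\delta) < 1$ for all $\rho \in F$, hence $x/\delta \in B_{X,F}^{\rho}$ and $|f(x/\delta)| \le M$. Therefore $|f(x)| \le M\delta < \e$, which shows that $f$ is continuous at $0$; by linearity, $f \in X^*$.

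The only mildly delicate point to keep track of is the mismatch between the strict-inequality form of the basic neighborhoods and the closed-ball form $B_X^{\rho} = \rho^{-1}([0,1])$ used in the definition of $\chi_F^X$; this is handled in both directions by a simple scaling argument, as above. No further hypothesis on $\mathcal{P}$ (such as directedness) is needed.
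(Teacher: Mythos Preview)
Your proof is correct. Both directions are handled cleanly, and the scaling arguments to pass between the strict-inequality basic neighborhoods and the closed balls $B_{X,F}^{\rho}=\bigcap_{\rho\in F}\rho^{-1}([0,1])$ are exactly what is needed.

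As for comparison: the paper does not actually prove Lemma~\ref{lem1}; it is stated as a standard fact about locally convex spaces and left without argument. Your write-up therefore supplies what the paper omits. The underlying idea is the classical one (a linear functional on an LCS is continuous iff it is bounded on some basic $0$-neighborhood, equivalently dominated by a finite maximum of seminorms), and your presentation makes the translation to the $\chi_F^X$ notation explicit. Your closing remark that directedness of $\mc{P}$ is not needed here is also accurate; the paper only invokes directedness later, to reduce finite sets $F$ to single seminorms $\mu=\sum_{\rho\in F}\rho$.
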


Note that if $\tau$ on $X$ is induced by a collection $\mc{P}$ of seminorms, then the family $\mc{P}'$ also induces $\tau$, where $\mc{P}'=\{\sum_{\rho\in F} \rho: F\ci \mc{P} \text{ is finite }\}$. It is easy to see that $\mc{P}'$ is a directed family, that is, for every $\rho_1, \rho_2\in\mc{P}'$, there exists a $\rho_3\in\mc{P}'$ such that $\max\{\rho_1, \rho_2\}\preceq \rho_3$. Therefore, the collection $$\mc{N}=\{\rho^{-1}([0, r)): r>0 \text{ and } \rho\in \mc{P}'\}$$ forms a neighborhood base at $0$ in $(X, \tau)$. Moreover, if $F\ci \mc{P}$, then $\chi_F^X(f)=\chi_\mu^X (f)$ for all $f\in X^*$, where $\mu=\sum\limits_{\rho\in F}\rho\in\mc{P}'$. Therefore, without loss of generality, we can assume that $\mc{P}$ is a directed family. We recall the problem~\ref{P1} stated above that can be partially answered by means of the following.

Let $(X, \tau)$ be an LCS such that $\tau$ is induced by a directed family $\mc{P}$ of seminorms.
Recall from \cite{MR2007455} that for a subspace $Y$ of $X$ and $\rho\in\mc{P}$, an element $y_0\in Y$ is said to be a $\rho$-\textit{best approximation} of an $x_0\in X$ if $$\rho(x_0-y_0)=\inf\{\rho(x_0-y): y\in Y\}.$$

Let us define $d_\rho(x_0,Y)=\inf\{\rho(x_0-y): y\in Y\}$, and also $P_{Y,\rho}(x_0)=\{y\in Y:\rho (x_0-y)=d_\rho(x_0,Y)\}$.

We say $y_0\in Y$ is a {\it unique $\mc{P}$-simultaneous best approximation} of an $x_0\in X$ if 
\[
P_{Y,\rho}(x_0)=\{y_0\} \text{ for all } \rho\in\mc{P}.
\]
In \cite{MR2007455} the authors have considered a weaker notion which was defined as {\it simultaneous best approximation}.
We call an affine subspace $L$ of a vector space $X$ is a {\it line in $X$} if there exists $x_0\in X\sm\{0\}$ such that $L=\{\alpha x_0:\al\in\mb{R}\}+z_0$, for some $z_0\in X$.

\begin{notn}
Let $\mc{P}$ be a given family of seminorms in an LCS $X$.
\bla
\item For $\mu\in\mc{P}$, define $\mc{P}_\mu=\{\rho\in\mc{P}: \rho\succeq\mu\}$.
\item For $f\in X^*$, define $\mc{P}_f=\{\rho\in\mc{P}: \chi_\rho^X (f)<\iy\}$.
\el
\end{notn}

\bdfn\label{D2}
Let $Y$ be a subspace of an LCS $(X,\tau)$ where $\tau$ is endowed by a family $\mc{P}$ of seminorms. Then $Y$ is said to have {\it Haar property} if for $x\in X$ there exists $\mu\in\mc{P}$ such that $x$ has a unique $\mc{P}_\mu$-simultaneous best approximation in $Y$. 
\edfn

The Proposition~\ref{P3} provides a rationale for considering the Definition~\ref{D2} as an analogue of strict convexity. Refer to the Problem~ \ref{P1}, the subsequent lemma facilitates a starting point to answer the first question posed.

\begin{lemma}\label{l2} 
Let $(X, \tau)$ be an LCS such that $\tau$ is induced by a directed family $\mc{P}$ of seminorms. Suppose $Y$ is a subspace of $X$. Then for every $f\in Y^*$ and $\rho\in \mc{P}$ such that $\chi_\rho^Y(f)<\iy$, we have an extension $\widetilde{f}\in X^*$ of $f$ such that $\chi_\rho^Y(f)=\chi_\rho^X(\widetilde{f})$.  
\end{lemma}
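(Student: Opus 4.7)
The plan is to reduce the statement to the classical Hahn-Banach extension theorem dominated by a single seminorm. Write $c := \chi_\rho^Y(f)$, which is finite by hypothesis.

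The first step is to establish that $|f(y)| \leq c\,\rho(y)$ for every $y \in Y$. If $\rho(y) > 0$, then $y/\rho(y) \in B_X^\rho \cap Y$, so the defining supremum gives $|f(y)|/\rho(y) \leq c$. If $\rho(y) = 0$, then $ny \in B_X^\rho \cap Y$ for every positive integer $n$, so $n|f(y)| \leq c < \iy$ for all $n$, forcing $f(y) = 0$; hence the inequality holds trivially. Thus $f$ is dominated on $Y$ by the seminorm $p(x) := c\,\rho(x)$ defined on all of $X$.

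The second step is to apply the seminorm-dominated Hahn-Banach theorem (\cite[Theorem~3.2]{WR}) to $p$ on the subspace $Y$: this produces a linear extension $\widetilde{f}: X \to \mathbb{F}$ of $f$ satisfying $|\widetilde{f}(x)| \leq c\,\rho(x)$ for every $x \in X$. Because $\rho$ is continuous, so is $\widetilde{f}$; equivalently, this bound immediately yields $\chi_\rho^X(\widetilde{f}) \leq c < \iy$, so Lemma~\ref{lem1} places $\widetilde{f}$ in $X^*$.

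Finally, the equality $\chi_\rho^Y(f) = \chi_\rho^X(\widetilde{f})$ comes from combining two inequalities: the bound just obtained gives $\chi_\rho^X(\widetilde{f}) \leq c = \chi_\rho^Y(f)$, while the reverse inequality $\chi_\rho^X(\widetilde{f}) \geq \chi_\rho^Y(f)$ is automatic, since $\widetilde{f}|_Y = f$ and the supremum defining $\chi_\rho^X(\widetilde{f})$ is taken over $B_X^\rho$, which contains $B_X^\rho \cap Y$. No genuine obstacle arises; the only point requiring care is the edge case $\rho(y) = 0$, which is forced to zero by the finiteness of $c$, and directedness of $\mc{P}$ plays no role here because the estimate concerns only the single seminorm $\rho$.
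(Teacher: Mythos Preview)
Your proof is correct and follows exactly the approach the paper indicates: the paper simply remarks that the lemma follows from the Hahn-Banach theorem applied with the dominating seminorm $\mu = c\rho$ where $c = \chi_\rho^Y(f)$, and you have written out precisely those details, including the edge case $\rho(y)=0$ and the two-sided inequality yielding $\chi_\rho^X(\widetilde{f}) = \chi_\rho^Y(f)$.
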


The proof of the Lemma~\ref{l2} follows from the Hahn-Banach theorem for locally convex spaces by considering the seminorm $\mu=c\rho$ where $c=\chi^Y_\rho(f)$.

We consider the real scalar field as the underlying scalar field for the spaces. When $E$ is a Banach space, then $B_E$ and $S_E$ represent the closed unit ball and the closed unit sphere of $E$, respectively. By $(E,w)$, we mean the LCS endowed with the weak topology on $E$. We use standard techniques in order to study the aforementioned extension-related problems in locally convex spaces. For a Banach space $E$, we consider the family $\mc{F}=\{\rho_F: F\ci B_{E^*}\mbox{ is finite}\}$, where $\rho_F(x)=\max\{|f(x)|: f\in F\} \text{ for all } x\in E$ in order to define the weak topology on $E$, and this choice leads us to derive the conclusion in Theorem~\ref{SNP and U}, although the family $\{\rho_F:F\ci E^* \mbox{~ is finite}\}$ also induces the weak topology on $E$. We refer the reader to Remark~\ref{R2} in this context.

\section{Property-U in locally convex spaces}

By a pair $(f,\rho)$ on a subspace $Y$ of an LCS $(X, \tau)$ where $\tau$ is induced by a family  $\mc{P}$ of seminorms, we mean $f\in Y^*$, $\rho\in\mc{P}$ such that $\chi_\rho^Y(f)<\iy$. 

\bdfn\label{D1}
Let $(X,\tau)$ be an LCS and $\tau$ be induced by a directed family $\mc{P}$ of seminorms. Let $\rho\in\mc{P}$ and $f\in Y^*$. An $\widetilde{f}\in X^*$ is said to be a Hahn-Banach extension (in short HBE) of $(f, \rho)$ on $Y$ if $\widetilde{f}|_Y=f$ and $\chi_{\rho}^Y(f)=\chi_{\rho}^X(\widetilde{f})$. 
\edfn

In general, a pair $(f, \rho)$ may not have a unique HBE in $X^*$, that is, there may exist two distinct HBEs $\widetilde{f}_1, \widetilde{f}_2\in X^*$ of $f$ such that $\chi_{\rho}^Y(f)=\chi_{\rho}^X(\widetilde{f}_1)=\chi_{\rho}^X(\widetilde{f}_2)$. This leads us to define the following.

\bdfn\label{D3}
Let $(X, \tau)$ be an LCS such that $\tau$ is induced by a directed family $\mc{P}$ of seminorms. 
\bla
\item We call a subspace $Y$ of $(X, \tau)$  has {\it seminorm preserving property} (SNP) if it satisfies anyone of the following equivalent conditions.\blr
\item For $f\in Y^*$, there exists a $\mu\in\mc{P}$ with $\chi_\mu^Y(f)<\iy$ such that all the pairs $(f, \rho)$ for $\rho\in\mc{P}_\mu$ have a unique HBE $\widetilde{f}\in X^*$. 
\item For $f\in Y^*$, there exists a $\mu\in\mc{P}$ such that for every $\rho_1, \rho_2\in\mc{P}_\mu$, both the pairs $(f, \rho_1)$ and $(f, \rho_2)$ have a unique HBE $\widetilde{f}\in X^*$.
 \el

\item We call $Y$ has {\it uniformly seminorm preserving property} (USNP) if it satisfies anyone of the following equivalent conditions.\blr

\item For $f\in Y^*$ and $\rho\in\mc{P}_f$, all the pair $(f, \rho)$ have a unique HBE $\widetilde{f}\in X^*$. 
\item For $f\in Y^*$ and $\rho_1, \rho_2\in \mc{P}_f$, both the pairs $(f,\rho_1)$ and $(f, \rho_2)$ have a unique HBE $\widetilde{f}\in X^*$. \el
\el
\edfn

\begin{Rem}\label{R1} 
If a subspace $Y$ of an LCS $(X, \tau)$ does not have the property-SNP, then there exists an $f\in Y^*$ such that for every $\mu\in \mc{P}$, we have $\rho_1, \rho_2\in \mc{P}_\mu$ such that the pairs $(f, \rho_1)$ and $(f, \rho_2)$ have distinct HBEs $\widetilde{f}_1$ and $\widetilde{f}_2$ in $X^*$, respectively. Similarly, if $Y$ does not have the property-USNP, then there exists an $f\in Y^*$ and $\rho_1, \rho_2\in \mc{P}_f$ such that the pairs $(f, \rho_1)$ and $(f, \rho_2)$ have distinct HBEs $\widetilde{f}_1$  and $\widetilde{f}_2$ in $X^*$, respectively.
\end{Rem}

\begin{Rem}
It should be noted that both the properties SNP and USNP depend on the generating family $\mc{P}$. 
\end{Rem}
 
It is now easy to observe that if $Y$ has USNP in $X$ then the identity in (\ref{E1}) holds for all $\mu\in\mc{P}$ and $x\in X\sm Y$. One can easily see that the condition in (\ref{E1}) for all $\mu\in\mc{P}$ and $x\in X\sm Y$ is not sufficient for USNP for a subspace $Y$. Although we have the following.

\begin{Pro} 
Suppose $Y$ is a subspace of an LCS $(X, \tau)$ such that $\tau$ is induced by a directed family $\mathcal{P}$ of seminorms. Then \tfae. 
\bla
\item $Y$ has the property USNP;
\item for every $f\in Y^*$, $x\in X\setminus Y$, and $\rho, \mu\in \mathcal{P}(f)$, we have
$$\sup_{v\in Y}[f(v)-\chi_{\rho}^Y(f)\rho(v+x)] = \inf_{w\in Y}[-f(w)+\chi_{\mu}^Y(f)\mu(w-x)].$$
%where $\alpha_{\rho}=\chi_{\rho}^Y(f)$ and $\alpha_{\mu}=\chi_{\mu}^Y(f)$.
\el
\end{Pro}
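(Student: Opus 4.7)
The plan is to reduce both directions to a direct application of the single-seminorm criterion (\ref{E1}), together with the bookkeeping observation that, whenever a unique HBE exists, both sides of the proposed identity in (b) can be identified with $-\widetilde f(x)$.

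First I would fix $f\in Y^*$ and $\rho\in\mc{P}_f$ and record that $\widetilde f\in X^*$ is an HBE of $(f,\rho)$ if and only if $\widetilde f|_Y=f$ and $|\widetilde f|\leq \chi_\rho^Y(f)\,\rho$ on $X$, since the inequality $\chi_\rho^X(\widetilde f)\geq \chi_\rho^Y(f)$ is automatic from the fact that $\widetilde f$ extends $f$, and a standard homogeneity argument turns the reverse inequality into the pointwise bound. Substituting the dominating seminorm $\chi_\rho^Y(f)\,\rho$ into (\ref{E1}) then shows that $(f,\rho)$ admits a unique HBE $\widetilde f_\rho$ if and only if, for every $x\in X\sm Y$,
\[
\sup_{v\in Y}\bigl[f(v)-\chi_\rho^Y(f)\,\rho(v+x)\bigr]=\inf_{w\in Y}\bigl[-f(w)+\chi_\rho^Y(f)\,\rho(w-x)\bigr],
\]
and tracking the free scalar in the one-step Hahn--Banach extension used in the proof of (\ref{E1}), this common value is precisely $-\widetilde f_\rho(x)$.

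For (a)$\Ra$(b), I would assume USNP, let $\widetilde f$ denote the common unique HBE of $(f,\rho)$ across all $\rho\in\mc{P}_f$, and observe that for any $\rho,\mu\in\mc{P}_f$ and any $x\in X\sm Y$ the left-hand side of the desired identity equals $-\widetilde f(x)$ by the uniqueness assertion for the pair $(f,\rho)$, while the right-hand side equals $-\widetilde f(x)$ by the uniqueness assertion for the pair $(f,\mu)$; hence the two sides agree.

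For (b)$\Ra$(a), the natural approach is first to specialize $\rho=\mu$ in (b), which recovers the criterion above and hence produces, for each $\rho\in\mc{P}_f$, a unique HBE $\widetilde f_\rho$ of $(f,\rho)$; reapplying (b) for arbitrary $\rho,\mu\in\mc{P}_f$, the LHS identifies with $-\widetilde f_\rho(x)$ and the RHS with $-\widetilde f_\mu(x)$, forcing $\widetilde f_\rho(x)=\widetilde f_\mu(x)$ for every $x\in X\sm Y$. Since the two already agree on $Y$, they coincide as elements of $X^*$, so a single $\widetilde f$ serves as the unique HBE of $(f,\rho)$ for every $\rho\in\mc{P}_f$, giving USNP. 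The point requiring most care, rather than any real obstacle, is the sign-correct identification of both the supremum and the infimum with $-\widetilde f_\rho(x)$ through (\ref{E1}); once this is in hand, the cross-seminorm shape of (b) encodes precisely the ``uniform'' aspect of USNP, and the equivalence drops out.
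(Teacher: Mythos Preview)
The paper states this proposition without proof, treating it as a direct consequence of the discussion surrounding (\ref{E1}); your proposal is correct and is precisely the argument the paper leaves implicit. Your reduction of ``HBE of $(f,\rho)$'' to the domination $|\widetilde f|\le \chi_\rho^Y(f)\rho$ and the identification of both sides of (\ref{E1}) with $-\widetilde f_\rho(x)$ are the only nontrivial points, and you handle them cleanly.
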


\begin{Pro} 
Suppose $Y$ is a subspace of an LCS $(X, \tau)$ such that $\tau$ is induced by a directed family $\mathcal{P}$ of seminorms. Then \tfae.
\bla
\item $Y$ has the property SNP;
\item for every $f\in Y^*$, there exists a $\nu\in \mathcal{P}(f)$ such that for all $\rho, \mu\in\mc{P}_{\nu}$ and $x\in X\setminus Y$, we have 
$$\sup_{v\in Y}[f(v)-\chi_{\rho}^Y(f)\rho(v+x)] = \inf_{w\in Y}[-f(w)+\chi_{\mu}^Y(f)\mu(w-x)].$$
%where $\alpha_{\rho}=\chi_{\rho}^Y(f)$ and $\alpha_{\mu}=\chi_{\mu}^Y(f)$.
\el
\end{Pro}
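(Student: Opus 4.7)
The plan is to parallel the preceding USNP proposition and reduce both directions to the single-seminorm criterion (\ref{E1}) from the introduction: for a seminorm $\sigma$ dominating $|f|$ on $Y$, the pair $(f,\sigma)$ admits a unique HBE if and only if $\sup_{v\in Y}[f(v)-\sigma(v+x)]=\inf_{w\in Y}[-f(w)+\sigma(w-x)]$ for every $x\in X\setminus Y$. Applied with $\sigma=\chi_\rho^Y(f)\,\rho$ for $\rho\in\mc{P}_\nu$, this identifies uniqueness of the HBE of $(f,\rho)$ with the diagonal case ($\mu=\rho$) of the identity in (ii); the remaining content of (ii) is that the cross terms ($\mu\neq\rho$) also coincide.

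For (i)$\Rightarrow$(ii), I would fix $\nu\in\mc{P}(f)$ witnessing SNP and let $\widetilde{f}\in X^*$ be the common unique HBE of every pair $(f,\rho)$ with $\rho\in\mc{P}_\nu$. Given $x\in X\setminus Y$ and $\rho,\mu\in\mc{P}_\nu$, expanding $|\widetilde{f}(v+x)|\leq\chi_\rho^Y(f)\rho(v+x)$ and $|\widetilde{f}(w-x)|\leq\chi_\mu^Y(f)\mu(w-x)$ yields the sandwich
$$\sup_{v\in Y}[f(v)-\chi_\rho^Y(f)\rho(v+x)]\leq -\widetilde{f}(x)\leq \inf_{w\in Y}[-f(w)+\chi_\mu^Y(f)\mu(w-x)].$$
Since (\ref{E1}) applied separately to the $\rho$- and $\mu$-diagonal pairs forces each outer term to equal $-\widetilde{f}(x)$, both inequalities collapse to equalities, which is exactly (ii).

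For (ii)$\Rightarrow$(i), specialising the identity to $\rho=\mu$ and invoking (\ref{E1}) produces, for each $\rho\in\mc{P}_\nu$, a unique HBE $\widetilde{f}_\rho\in X^*$ of the pair $(f,\rho)$. The remaining task is to see that all the $\widetilde{f}_\rho$ coincide. For $x\in X\setminus Y$ one chains
$$-\widetilde{f}_\rho(x)=\sup_{v\in Y}[f(v)-\chi_\rho^Y(f)\rho(v+x)]=\inf_{w\in Y}[-f(w)+\chi_\mu^Y(f)\mu(w-x)]=-\widetilde{f}_\mu(x),$$
using (\ref{E1}) for the two outer equalities and hypothesis (ii) in the middle. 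Together with $\widetilde{f}_\rho|_Y=f=\widetilde{f}_\mu|_Y$, this gives $\widetilde{f}_\rho=\widetilde{f}_\mu$ on all of $X$, so their common value is the single HBE witnessing SNP.

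I do not expect a serious obstacle once (\ref{E1}) is used as a black box; the only thing to stay alert to is that the normalising constant $\chi_\rho^Y(f)$ genuinely varies with $\rho$, so (\ref{E1}) must be applied one seminorm at a time rather than to a single $\sigma$ that controls the whole of $\mc{P}_\nu$. The substantive observation is that although the $\rho$-slot expressions $\sup_{v\in Y}[f(v)-\chi_\rho^Y(f)\rho(v+x)]$ look $\rho$-dependent, their common value is the intrinsically $\rho$-independent quantity $-\widetilde{f}(x)$, which is precisely what makes the cross-term equality in (ii) possible.
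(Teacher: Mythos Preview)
Your proposal is correct and is exactly the argument the paper has in mind: both propositions are stated without proof because they are meant to follow directly from the criterion~(\ref{E1}), and your reduction---deriving the sandwich $\sup\leq -\widetilde{f}(x)\leq\inf$ from $|\widetilde{f}|\leq\chi_\rho^Y(f)\rho$ and $|\widetilde{f}|\leq\chi_\mu^Y(f)\mu$, then collapsing it via the diagonal instances of~(\ref{E1})---is precisely how that reduction goes. The only point worth making explicit in the $(ii)\Rightarrow(i)$ direction is that the outer equalities $-\widetilde{f}_\rho(x)=\sup_v[\cdots]$ and $\inf_w[\cdots]=-\widetilde{f}_\mu(x)$ again use the sandwich (now with $\rho=\mu$) together with~(\ref{E1}), not~(\ref{E1}) alone; you do this implicitly, but it may read more smoothly if stated.
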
 

In a Banach space $E$ and a subspace $M$ of $E$, $M^\perp$ is a proximinal subspace of $E^*$. One can conclude a similar fact for LCS in the form of the following.

\begin{lemma}\label{lem3}
    Let $Y$ be a subspace of an LCS $(X,\tau)$. Let $h\in X^*$ and $\rho\in\mc{P}$ be such that $d_{\chi_\rho}(h,Y^\perp)<\iy$. Then there exists $g\in Y^\perp$ such that $$\chi_\rho^X(h-g)=d_{\chi_\rho}(h,Y^\perp).$$
\end{lemma}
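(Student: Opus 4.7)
The plan is to reduce the proximinality statement to the Hahn–Banach extension result already recorded as Lemma~\ref{l2}. In the Banach space setting, proximinality of $M^{\perp}$ in $E^{*}$ follows from the isometric identification $E^{*}/M^{\perp}\cong M^{*}$; here the analogue is that restriction to $Y$ carries $\chi_{\rho}^{X}$ to $\chi_{\rho}^{Y}$, and by Lemma~\ref{l2} every $f\in Y^{*}$ with $\chi_{\rho}^{Y}(f)<\iy$ admits a section, i.e.\ an extension realizing the same $\chi_{\rho}$-value.

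First I would set $f:=h\big|_{Y}\in Y^{*}$ and check that $\chi_{\rho}^{Y}(f)<\iy$. Since $d_{\chi_{\rho}}(h,Y^{\perp})<\iy$, pick some $g_{1}\in Y^{\perp}$ with $\chi_{\rho}^{X}(h-g_{1})<\iy$. For every $y\in B_{X}^{\rho}\cap Y$ we have $(h-g_{1})(y)=h(y)=f(y)$, whence
\[
\chi_{\rho}^{Y}(f)=\sup_{y\in B_{X}^{\rho}\cap Y}|(h-g_{1})(y)|\le\chi_{\rho}^{X}(h-g_{1})<\iy.
\]
More generally, the same inequality shows $\chi_{\rho}^{Y}(f)\le\chi_{\rho}^{X}(h-g)$ for \emph{every} $g\in Y^{\perp}$, and hence
\[
\chi_{\rho}^{Y}(f)\le d_{\chi_{\rho}}(h,Y^{\perp}).
\]

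Next I would invoke Lemma~\ref{l2} to produce an extension $\widetilde{f}\in X^{*}$ of $f$ with $\chi_{\rho}^{X}(\widetilde{f})=\chi_{\rho}^{Y}(f)$. Setting $g:=h-\widetilde{f}$, we have $g\big|_{Y}=f-f=0$, so $g\in Y^{\perp}$. Then on the one hand,
\[
d_{\chi_{\rho}}(h,Y^{\perp})\le\chi_{\rho}^{X}(h-g)=\chi_{\rho}^{X}(\widetilde{f})=\chi_{\rho}^{Y}(f),
\]
and combined with the reverse inequality above this gives $\chi_{\rho}^{X}(h-g)=d_{\chi_{\rho}}(h,Y^{\perp})$, as required.

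There is no serious obstacle: the argument is essentially a one-line consequence of Lemma~\ref{l2} together with the observation that restriction to $Y$ cannot increase the extended seminorm $\chi_{\rho}$. The only point that deserves care is checking that $\chi_{\rho}^{Y}(f)$ is finite (and in fact equals $d_{\chi_{\rho}}(h,Y^{\perp})$), which is exactly where the hypothesis $d_{\chi_{\rho}}(h,Y^{\perp})<\iy$ enters.
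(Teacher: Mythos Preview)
Your argument is correct, but it is genuinely different from the paper's own proof of Lemma~\ref{lem3}. The paper argues by compactness: it picks a minimizing net $(g_\alpha)\subset Y^\perp$ with $\chi_\rho^X(h-g_\alpha)\to d_{\chi_\rho}(h,Y^\perp)$, invokes the Banach--Alaoglu theorem to extract a $w^*$-accumulation point $g$, and then uses lower semicontinuity of $\chi_\rho^X$ to conclude $\chi_\rho^X(h-g)\le d_{\chi_\rho}(h,Y^\perp)$. Your route instead goes through Hahn--Banach (Lemma~\ref{l2}): you identify the infimum with $\chi_\rho^Y(h|_Y)$ and then exhibit an explicit minimizer $g=h-\widetilde{f}$ coming from a $\chi_\rho$-preserving extension of $h|_Y$.

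What each approach buys: the paper's compactness argument is a generic ``distance is attained on a $w^*$-closed subspace intersected with a $w^*$-compact set'' fact and does not compute the value of the distance. Your argument is more informative---it simultaneously proves the identity $d_{\chi_\rho}(h,Y^\perp)=\chi_\rho^Y(h|_Y)$ and produces the minimizer constructively. In fact your proof is essentially the content of Proposition~\ref{P2} (which the paper states immediately afterwards), adapted to the present hypothesis: Proposition~\ref{P2} assumes $\chi_\rho^X(f)<\iy$ to get $\chi_\rho^Y(f|_Y)<\iy$, whereas you correctly observe that the weaker hypothesis $d_{\chi_\rho}(h,Y^\perp)<\iy$ already forces $\chi_\rho^Y(h|_Y)<\iy$. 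So your proof in effect merges Lemma~\ref{lem3} and Proposition~\ref{P2} into a single Hahn--Banach computation, avoiding the appeal to Banach--Alaoglu altogether.
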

\begin{proof}
Let $\chi_\rho^X(h-g_\al)\ra d_{\chi_\rho}(h,Y^\perp)$ for some $(g_\al)\ci Y^\perp$. 
From the Banach-Alaglu theorem (see \cite[Theorem~3.15]{WR}), the net $(g_\al)$ has a $w^*$ accumulation point. Without loss of generality, assume that $g_{\alpha}\ra g$ in $w^*$-topology in $X^*$. We see that $\chi_\rho^X(h-g)\leq \lim\sup \chi_\rho(h-g_{\alpha})=d_{\chi_\rho}(h,Y^\perp)$. 
Hence, the result follows.   
\end{proof}

\begin{Pro}\label{P2} 
Let $Y$ be a subspace of an LCS $(X, \tau)$ and let $\tau$ be induced by a directed family $\mc{P}$ of seminorms. If $f\in X^*$, then there exists a $\rho\in\mc{P}$ such that $d_{\chi_{\rho}}(f,Y^{\perp})=\chi_\rho^Y(f|_Y)$, where $d_{\chi_\rho}(f,Y^{\perp})=\inf\{\chi_\rho(f-g):g\in Y^{\perp}\}$.
\end{Pro}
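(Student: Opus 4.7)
The plan is to exhibit a specific $\rho$ and verify two inequalities, the second of which will use the Hahn-Banach extension lemma (Lemma~\ref{l2}) in a standard duality trick. Since $\mc{P}$ is directed and $f\in X^*$, Lemma~\ref{lem1} lets me pick a single seminorm $\rho\in\mc{P}$ with $\chi_\rho^X(f)<\iy$; automatically $\chi_\rho^Y(f|_Y)\le \chi_\rho^X(f)<\iy$, so both sides of the proposed equality are finite.

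First I would handle the easy inequality $\chi_\rho^Y(f|_Y)\le d_{\chi_\rho}(f,Y^\perp)$. For any $g\in Y^\perp$ and any $y\in B_{X,\rho}\cap Y$ we have $(f-g)(y)=f(y)$, and since $B_{X,\rho}\cap Y\subseteq B_{X,\rho}$,
\[
\chi_\rho^Y(f|_Y)=\sup_{y\in B_{X,\rho}\cap Y}|(f-g)(y)|\le \sup_{x\in B_{X,\rho}}|(f-g)(x)|=\chi_\rho^X(f-g).
\]
Taking the infimum over $g\in Y^\perp$ gives the desired bound.

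For the reverse inequality, the natural move is to manufacture a $g\in Y^\perp$ that attains the distance. Applying Lemma~\ref{l2} to $f|_Y\in Y^*$ and the chosen $\rho$ produces an extension $\widetilde{f}\in X^*$ with $\chi_\rho^X(\widetilde{f})=\chi_\rho^Y(f|_Y)$. Set $g:=f-\widetilde{f}$; since $\widetilde{f}$ restricts to $f$ on $Y$, we have $g|_Y=0$, i.e.\ $g\in Y^\perp$. Then
\[
d_{\chi_\rho}(f,Y^\perp)\le \chi_\rho^X(f-g)=\chi_\rho^X(\widetilde{f})=\chi_\rho^Y(f|_Y),
\]
which together with the previous inequality yields the required equality (with the infimum actually attained).

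There is no real obstacle: the whole argument is essentially the LCS analogue of the Banach-space duality $d(f,M^\perp)=\|f|_M\|$, once Lemma~\ref{lem1} is used to secure a $\rho\in\mc{P}$ that makes $\chi_\rho^X(f)$ finite so Lemma~\ref{l2} is applicable. The only subtlety worth flagging is that the directedness of $\mc{P}$ is what lets us collapse the possibly finite set $F\subseteq \mc{P}$ provided by Lemma~\ref{lem1} into a single seminorm; this is where the hypothesis that $\mc{P}$ be directed is genuinely used. Note also that Lemma~\ref{lem3} is not needed for the existence of an attaining $g$, because Hahn-Banach already supplies one explicitly.
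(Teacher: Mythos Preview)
Your argument is correct and essentially identical to the paper's own proof: both select $\rho\in\mc{P}$ with $\chi_\rho^X(f)<\iy$, obtain a Hahn--Banach extension $\widetilde{f}$ of $f|_Y$ with $\chi_\rho^X(\widetilde{f})=\chi_\rho^Y(f|_Y)$ to witness $d_{\chi_\rho}(f,Y^\perp)\le\chi_\rho^Y(f|_Y)$, and verify the reverse inequality by the obvious estimate $\chi_\rho^Y(f|_Y)\le\chi_\rho^X(f-g)$ for all $g\in Y^\perp$. The only cosmetic differences are the order of the two inequalities and that you explicitly invoke Lemma~\ref{lem1} and the directedness of $\mc{P}$ to pass from a finite $F$ to a single $\rho$, which the paper leaves implicit.
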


\begin{proof} 
First of all there exists a $\rho\in\mc{P}$ such that $\chi_{\rho}^X(f)<\iy$ and hence $\chi_{\rho}^Y(f|_Y)<\iy$. By the Hahn-Banach theorem, there exists an $h\in X^*$ such that $h|_Y=f|_Y$ and $\chi_{\rho}^X(h)=\chi_{\rho}^Y(f|_Y)$. Then $f-h\in Y^{\perp}$ and $\chi_\rho^Y(f|_Y)=\chi_\rho^X(f-(f-h))\geq d_{{\chi_{\rho}}}(f,Y^{\perp})$. On the other hand if $g\in Y^{\perp}$, then 
  \beqa
      \chi_\rho^Y(f|_Y) &=& \sup\{|(f-g)(y)|:\rho(y)\leq 1\mbox{ and }y\in Y\}\\
                          &\leq & \sup\{|(f-g)(x)|:\rho(x)\leq 1\mbox{ and }x\in X\}\\
                          &=& \chi_\rho^X(f-g).
  \eeqa
This concludes $\chi_\rho^Y(f|_Y)\leq\inf_{g\in Y^\perp}\chi_\rho^X(f-g)$ and hence we get $\chi_\rho^Y(f|_Y)\leq d_{\chi_{\rho}}(f,Y^{\perp})$. Hence, the result follows.   
\end{proof}

Recall the following from \cite{RP}.

\begin{thm}\label{thRP}
    Let $M$ be a subspace of a Banach space $E$. Then $M$ has property-U in $E$ if and only if $M^\perp$ has the Haar property in $E^*$.
\end{thm}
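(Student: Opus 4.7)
The plan is to exploit the standard duality between Hahn-Banach extensions of $f\in M^*$ and nearest points in $M^\perp$ to any fixed extension of $f$ in $E^*$. I would first pick $f\in M^*$ and an arbitrary (not necessarily norm-preserving) extension $\widetilde{f}_0\in E^*$, supplied by the ordinary Hahn-Banach theorem. Since the difference of any two extensions of $f$ vanishes on $M$, the set of all extensions of $f$ to $E^*$ is precisely the affine subspace $\widetilde{f}_0+M^\perp$. The key identity to establish is
\[
\|f\|_{M^*} \;=\; d(\widetilde{f}_0,M^\perp) \;=\; \inf\{\|\widetilde{f}_0-g\|_{E^*}: g\in M^\perp\}.
\]
The inequality $\|f\|\leq\|\widetilde{f}_0-g\|$ for each $g\in M^\perp$ is immediate from the fact that $\widetilde{f}_0-g$ extends $f$, and the reverse inequality follows from the existence of some norm-preserving extension. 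Once this identity is in place, the assignment $g\mapsto \widetilde{f}_0-g$ becomes a bijection between the best approximations of $\widetilde{f}_0$ in $M^\perp$ and the norm-preserving extensions of $f$ to $E$.

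For the forward implication, assume $M$ has property-U. Given an arbitrary $x^*\in E^*$, I would set $f=x^*|_M$ and apply the bijection above with $\widetilde{f}_0=x^*$: uniqueness of the norm-preserving extension of $f$ translates directly into uniqueness of the best approximation of $x^*$ in $M^\perp$. The existence of such a best approximation is automatic, produced as $x^*$ minus a norm-preserving extension of $f$. Hence $M^\perp$ has the Haar property in $E^*$.

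Conversely, suppose $M^\perp$ has the Haar property. Given any $f\in M^*$, pick any extension $\widetilde{f}_0\in E^*$; the bijection identifies norm-preserving extensions of $f$ with best approximations of $\widetilde{f}_0$ in $M^\perp$, and these are unique by hypothesis. Thus $f$ has a unique norm-preserving extension and $M$ has property-U.

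The only genuine step is the distance identity $\|f\|=d(\widetilde{f}_0,M^\perp)$; after that, both directions follow from the same bijection. The main conceptual check is that Definition~\ref{D2}, when specialized to a Banach space (where $\mc{P}$ consists of the single norm), reduces exactly to uniqueness of the best approximation in $M^\perp$, so no additional subtlety arises from the LCS framing.
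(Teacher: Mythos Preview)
Your argument is correct and is essentially the classical Phelps argument. However, the paper does not supply its own proof of this theorem: it is introduced with ``Recall the following from \cite{RP}'' and stated without proof, so there is no in-paper argument to compare against directly.

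That said, the paper \emph{does} prove the LCS analogue (Theorem~\ref{th1}) immediately afterward, and its method there is exactly the one you outline, transported to the seminorm setting: Proposition~\ref{P2} plays the role of your distance identity $\|f\|=d(\widetilde{f}_0,M^\perp)$, and the proof of Theorem~\ref{th1}$(a)$ runs the same bijection between HBEs of $f|_Y$ and best approximations of $f$ in $Y^\perp$. So your proposal matches both Phelps' original argument and the template the present paper uses for its generalization. Your closing remark that Definition~\ref{D2} collapses to ordinary Chebyshev uniqueness when $\mc{P}=\{\|\cdot\|\}$ is the right sanity check linking the two formulations.
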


We now derive a similar characterization for the property-(SNP/USNP) in LCS. Let us recall the notion of $\mc{P}$-simultaneous best approximation in a locally convex space, defined in section~1.2.

\begin{thm}\label{th1} 
Let $(X, \tau)$ be an LCS such that $\tau$ is induced by a directed family $\mc{P}$ of seminorms.  
\bla
\item A subspace $Y$ of $(X, \tau)$ has  the property-SNP if and only if $Y^\perp$ has the Haar property in $X^*$. 

\item A subspace $Y$ of $(X, \tau)$ has the property-USNP if and only if for $f\in X^*$, $f$ has a unique $\mc{P}_f$-simultaneous best approximation $\widetilde{f}$ in $Y^\perp$.
\el  
	
\end{thm}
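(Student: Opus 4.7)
Plan. The proof is modeled on Phelps' argument for Theorem~\ref{thRP}, now with the single norm on $E^*$ replaced by the directed family of extended seminorms $\{\chi_\rho^X:\rho\in\mc{P}\}$ on $X^*$. The central device is a bijective correspondence: fix any $f\in X^*$ and set $h:=f|_Y$; then $\widetilde{h}\mapsto g:=f-\widetilde{h}$ is an affine bijection from extensions of $h$ in $X^*$ onto $Y^\perp$. By Proposition~\ref{P2}, $d_{\chi_\rho}(f,Y^\perp)=\chi_\rho^Y(h)$ whenever $\chi_\rho^X(f)<\infty$, so $\widetilde{h}$ is an HBE of the pair $(h,\rho)$ if and only if $g$ is a $\chi_\rho$-best approximation of $f$ in $Y^\perp$. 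Both parts of the theorem are read off from this identification, combined with Lemma~\ref{lem1} (to guarantee that some relevant seminorm keeps things finite) and Lemma~\ref{l2} (for existence of HBEs).

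For part (a), $(\Rightarrow)$: given $f\in X^*$, set $h=f|_Y$ and invoke SNP to produce $\mu\in\mc{P}$ with $\chi_\mu^Y(h)<\infty$ together with a unique $\widetilde{h}\in X^*$ serving as a common HBE of $(h,\rho)$ for every $\rho\in\mc{P}_\mu$. Using Lemma~\ref{lem1} and directedness, enlarge $\mu$ so additionally $\chi_\mu^X(f)<\infty$; this forces $\mc{P}_\mu\subseteq\mc{P}_f$ and makes Proposition~\ref{P2} applicable throughout $\mc{P}_\mu$. The bijection immediately identifies $g:=f-\widetilde{h}$ as the unique $\mc{P}_\mu$-simultaneous best approximation of $f$ in $Y^\perp$, yielding the Haar property. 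For $(\Leftarrow)$: start with $h\in Y^*$, produce an HBE $f_0\in X^*$ of some $(h,\rho_0)$ via Lemma~\ref{l2}, apply the Haar property to obtain $\mu$ and a unique $\mc{P}_\mu$-simultaneous best approximation $g_0\in Y^\perp$ of $f_0$ (after enlarging $\mu$ so $\chi_\mu^X(f_0)<\infty$), and read $\widetilde{h}:=f_0-g_0$ as the SNP-witness for $h$.

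Part (b) uses the same template, but with the free parameter $\mu$ removed: one pairs $\mc{P}_f$ on the approximation side with $\mc{P}_h$ on the extension side. I expect the technical core of the argument to lie here, because one only has $\mc{P}_f\subseteq\mc{P}_h$ a priori, possibly strictly, so a $\mc{P}_f$-simultaneous best approximation translates to a candidate HBE that is tight on $\mc{P}_f$ but not automatically on the full $\mc{P}_h$ demanded by USNP. The plan is to resolve this by upgrading the reference functional: on the forward direction, once a candidate $\widetilde{h}:=f-g^{*}$ is produced, use that $\mc{P}_{\widetilde{h}}\supseteq\mc{P}_f$ and iterate the correspondence with $\widetilde{h}$ in place of $f$, eventually exhausting $\mc{P}_h$; on the converse direction the same reroute plus Lemma~\ref{l2} lifts a $\mc{P}_{f_0}$-best approximation to an HBE tight on all of $\mc{P}_h$. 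Once this matching of index sets is arranged, the bijection of the first paragraph gives USNP $\Leftrightarrow$ the uniqueness of $\mc{P}_f$-simultaneous best approximations for every $f\in X^*$.
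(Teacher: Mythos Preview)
Your treatment of part~(a) is correct and is essentially the paper's argument, just run directly rather than by contraposition: both proofs rest on the affine bijection $\widetilde h\leftrightarrow g=f-\widetilde h$ between extensions of $h=f|_Y$ and $Y^\perp$, together with Proposition~\ref{P2}.

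For part~(b), however, your plan has a genuine gap. First, the difficulty you flag sits in only one direction. From USNP to the approximation statement there is no problem: given $f\in X^*$ and $h=f|_Y$, USNP hands you a single $\widetilde h$ that is the unique HBE of $(h,\rho)$ for \emph{every} $\rho\in\mc P_h\supseteq\mc P_f$, and then $g^*:=f-\widetilde h$ is immediately the unique $\mc P_f$-simultaneous best approximation. No iteration is needed there. The real issue is the converse, and your iteration scheme does not resolve it. Once you produce $\widetilde h:=f-g^*$ and reapply the hypothesis with $\widetilde h$ in place of $f$, the unique $\mc P_{\widetilde h}$-best approximant of $\widetilde h$ is $0$ (since $0$ already lies in $P_{Y^\perp,\chi_\rho}(\widetilde h)$ for any $\rho\in\mc P_f$), so the process returns the same $\widetilde h$ and the index set does not grow; there is no mechanism by which ``iterating'' exhausts $\mc P_h$.

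What does work is a single use of directedness rather than iteration. Suppose $f_1,f_2$ are HBEs of $(h,\rho_1),(h,\rho_2)$ with $\rho_1,\rho_2\in\mc P_h$. Choose $\rho_3\in\mc P$ with $\rho_3\succeq\max\{\rho_1,\rho_2\}$; then $\rho_3\in\mc P_{f_1}\cap\mc P_{f_2}$. Applying the hypothesis to $f_1$ gives a unique $\mc P_{f_1}$-simultaneous best approximant; since $0\in P_{Y^\perp,\chi_{\rho_1}}(f_1)$, this approximant is $0$, so in particular $P_{Y^\perp,\chi_{\rho_3}}(f_1)=\{0\}$, i.e.\ $\chi_{\rho_3}^X(f_1)=\chi_{\rho_3}^Y(h)$ and $f_1$ is an HBE of $(h,\rho_3)$. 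The same holds for $f_2$. Now $f_1-f_2\in Y^\perp$ satisfies $\chi_{\rho_3}^X\bigl(f_1-(f_1-f_2)\bigr)=\chi_{\rho_3}^X(f_2)=\chi_{\rho_3}^Y(h)=d_{\chi_{\rho_3}}(f_1,Y^\perp)$, so $f_1-f_2\in P_{Y^\perp,\chi_{\rho_3}}(f_1)=\{0\}$ and $f_1=f_2$. This is the missing step you need; the paper itself only says part~(b) is ``similar to the proof of~(a)'' and does not spell it out.
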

\begin{proof} 
$(a)$. Suppose that $Y$ does not have the property-SNP. Then there exists an $f\in Y^*$ and for every $\mu\in\mc{P}$, there exist $\rho_1,\rho_2\in\mc{P}_\mu$ such that the pairs $(f,\rho_1)$ and $(f,\rho_2)$ have two distinct HBEs $g$ and $h$, respectively. It follows from Proposition~\ref{P2} that,
\[
\chi_{\rho_1}^Y(f)={\chi_{\rho_1}^X}(g)\geq \chi_{\rho_1}^Y(g|_Y)=\inf\{\chi_{\rho_1}^X(g-p): p\in Y^\perp\}=\chi_{\rho_1}^Y(f). 
\]
Thus, $\chi_{\rho_1}^X(g)=\inf\{\chi_{\rho_1}^X(g-p): p\in Y^\perp\}$. Similarly, we can show that $\chi_{\rho_2}^X(g-(g-h))=\chi_{\rho_2}^X(h)=\inf\{\chi_{\rho_2}^X(h-p): p\in Y^\perp\}$. This shows that $P_{Y^\perp,\chi_{\rho_1}^X}(g)=\{0\}$ and $P_{Y^\perp,\chi_{\rho_2}^X}(g)=\{g-h\}$. Since $\rho_1, \rho_2\in\mc{P}_\mu$, $f$ cannot have a unique $\mc{P}_\mu$-simultaneous best approximation in $Y^\perp$. 
	
On the other hand, let $f\in X^*$ and for every $\mu\in \mc{P}$, there exist $\rho_1, \rho_2\in\mc{P}_\mu$ and two distinct $f_1,f_2\in Y^\perp$ such that $P_{Y^\perp,\chi_{\rho_1}^X}(f)=\{f_1\}$ and $P_{Y^\perp,\chi_{\rho_2}^X}(f)=\{f_2\}$. We now consider the pair $(f|_Y,\rho_1)$ on the subspace $Y$. Suppose $\hat{f}\in X^*$ is an HBE of $f|_Y$ such that $\chi_{\rho_1}^X(\hat{f})=\chi_{\rho_1}^Y(f|_Y)$. Then from Proposition~\ref{P2},
\[
\chi_{\rho_1}^X(\hat{f})=\chi_{\rho_1}^Y(f|_Y)=\inf\{\chi_{\rho_1}^X(f-h): h\in Y^\perp\}= \chi_{\rho_1}^X(f-f_1). 
\]
Similarly, we can prove that $\chi_{\rho_2}^Y(f|_Y)=\chi_{\rho_2}^X(f-f_2)$. This shows that pairs $(f|_Y, \rho_1)$ and $(f|_Y, \rho_2)$ have two distinct HBE viz. $f-f_1$ and $f-f_2$. Since $\mu\in\mc{P}$ is arbitrary, $Y$ cannot have the property-SNP.

$(b)$. It is similar to the proof of $(a)$.
\end{proof}
\begin{Rem}
In Banach spaces, the property stated in Theorem~\ref{thRP} is interchangeable when the space is reflexive. The conditions under which the property in Theorem~\ref{th1} can be changed remain unanswered. 
\end{Rem}

Recall the Haar property defined in Definition~\ref{D2}. 

\begin{Pro}\label{P3}
Let $(X,\tau)$ be an LCS, then \tfae.
\bla
\item Each line in $X^*$ has the Haar property in $X^*$. 
\item Any $w^*$-closed subspace $W$ of $X^*$ has the Haar property.
\el
\end{Pro}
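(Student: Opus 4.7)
The plan is to mirror the classical equivalence (in Banach spaces, cf. Taylor~\cite{AT} and Foguel~\cite{SF}) between strict convexity of the dual and unique best approximation from arbitrary closed subspaces, with (b)$\Rightarrow$(a) handled structurally and (a)$\Rightarrow$(b) by a line-reduction argument through two distinct best approximators.

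For (b)$\Rightarrow$(a), any line $L\ci X^*$ has the form $L_0+z_0$ with $L_0=\mb{R}g$ one-dimensional. Since $(X^*,w^*)$ is Hausdorff, every finite-dimensional subspace is $w^*$-closed; hence $L_0$ is $w^*$-closed and (b) applies to it. Translation invariance of the best approximation operator, $P_{L_0+z_0,\chi_\rho^X}(f)=z_0+P_{L_0,\chi_\rho^X}(f-z_0)$, preserves uniqueness uniformly over $\rho\succeq\mu$, so the Haar property descends from $L_0$ to the affine line $L$.

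For (a)$\Rightarrow$(b), let $W\ci X^*$ be $w^*$-closed and $f\in X^*$, and suppose toward a contradiction that $W$ fails the Haar property at $f$. Then for every $\mu\in\mc{P}$, no single $w_0\in W$ realises $P_{W,\chi_\rho^X}(f)=\{w_0\}$ for all $\rho\succeq\mu$. Writing $W=(W_\perp)^\perp$ and invoking Lemma~\ref{lem3}, best approximators in $W$ exist whenever the relevant distance is finite. I would then extract two distinct $w_1,w_2\in W$ that best-approximate $f$ either under a common $\chi_\rho^X$ (if some $\rho$ admits multiple best approximators) or under $\chi_{\rho_1}^X,\chi_{\rho_2}^X$ (if the unique best approximator merely varies with the seminorm). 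In either case the affine line $L$ through $w_1,w_2$ lies in $W$, and the inequality $d_{\chi_\rho^X}(f,W)\leq d_{\chi_\rho^X}(f,L)\leq\chi_\rho^X(f-w_i)=d_{\chi_\rho^X}(f,W)$ forces $w_i\in P_{L,\chi_\rho^X}(f)$ and in the second mode yields $P_{L,\chi_{\rho_i}^X}(f)=\{w_i\}$ with $w_1\neq w_2$. Applying (a) to $L$ produces a $\mu_L\in\mc{P}$ with a unique $\mc{P}_{\mu_L}$-simultaneous best approximation $y_L\in L$; using the directedness of $\mc{P}$ to compare $\rho_1,\rho_2$ with $\mu_L$ (after choosing the initial $\mu$ witnessing failure of $W$ to dominate $\mu_L$) forces $w_1=y_L=w_2$, the desired contradiction.

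The main obstacle is the subtle quantifier structure of the ``unique $\mc{P}_\mu$-simultaneous best approximation'' condition, which demands a single element to be the unique best approximator for every $\rho\succeq\mu$. Its failure splits into two modes — multiplicity at some individual $\rho$, or drift of the unique minimiser across seminorms — and the line reduction must produce a failing line in both cases. Even after the line $L$ is constructed, matching $\mu_L$ (arising from Haar of $L$) with the seminorms witnessing failure of $W$ is delicate: one has to deploy directedness of $\mc{P}$ and verify that upgrading to larger seminorms does not erase the extracted two-point non-uniqueness, which is where the technical core of the argument lies.
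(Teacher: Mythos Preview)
Your approach matches the paper's: the paper also proves $(a)\Rightarrow(b)$ by contrapositive, extracting for a given $\mu$ two distinct best approximators $w_{\rho_1}\neq w_{\rho_2}$ in $W$ and declaring that the line $L=w_{\rho_1}+\mb{R}(w_{\rho_2}-w_{\rho_1})$ fails the Haar property, while dispatching $(b)\Rightarrow(a)$ in one line as ``obvious.'' Your proposal is more careful on both fronts---handling the affine-versus-subspace distinction via translation for $(b)\Rightarrow(a)$, and explicitly flagging the quantifier-matching between $\mu_L$ and the witnesses $\rho_1,\rho_2$ that the paper simply glosses over---but the underlying strategy is identical.
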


\begin{proof}
Let $(b)$ be not true. Then there exists $W\ci X^*$ and there exists a $f\in X^*$ such that any $\mu\in \mc{P}$ there exists $\rho_1,\rho_2\in\mc{P}_\mu$ and $w_{\rho_1}, w_{\rho_2}\in W$, $w_{\rho_1}\neq w_{\rho_2}$, where,
    \[
       \chi_{\rho_1}(w_{\rho_1}-f)= d_{\chi_{\rho_1}}(f,W) \mbox{~and~}
        \chi_{\rho_2}(w_{\rho_2}-f)= d_{\chi_{\rho_2}}(f,W).
    \]
Therefore, the line $L=w_{\rho_1}+\mb{R}z_0$ does not have the Haar property, here $z_0=w_{\rho_2}-w_{\rho_1}$. This concludes $(a)\Ra (b)$.

    $(b)\Ra (a)$. This is obvious.
\end{proof}

We now derive a characterization for locally convex spaces where every subspace has the property-SNP. Similar characterization for Banach spaces was derived by Taylor and Foguel, discussed in the Introduction.

\begin{thm}\label{th2}
  Let $X$ be an LCS. Then every subspace of $X$ has the property-SNP in $X$ if and only if each line in $X^*$ has the Haar property in $X^*$.
\end{thm}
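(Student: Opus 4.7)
The plan is to package Theorem~\ref{th1}(a) together with Proposition~\ref{P3} through the bipolar correspondence between subspaces of $X$ and $w^*$-closed subspaces of $X^*$. Theorem~\ref{th1}(a) already converts \emph{property-SNP of $Y$} into \emph{Haar property of $Y^\perp$}, and Proposition~\ref{P3} converts \emph{Haar property of all $w^*$-closed subspaces of $X^*$} into \emph{Haar property of all lines in $X^*$}. So the only link missing is that the class $\{Y^\perp : Y \subseteq X \text{ a subspace}\}$ coincides with the class of $w^*$-closed subspaces of $X^*$.

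For the forward direction, assume every subspace of $X$ has property-SNP. Fix an arbitrary $w^*$-closed subspace $W \subseteq X^*$. By the bipolar theorem in LCS duality, $W = (W_\perp)^{\perp}$, where $W_\perp = \{x \in X : g(x) = 0 \text{ for all } g \in W\}$ is a (linear) subspace of $X$. Since $W_\perp$ has property-SNP by hypothesis, Theorem~\ref{th1}(a) tells us that $(W_\perp)^\perp = W$ has the Haar property in $X^*$. Thus every $w^*$-closed subspace of $X^*$ has the Haar property, so by Proposition~\ref{P3}, so does every line in $X^*$.

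For the converse, assume each line in $X^*$ has the Haar property. Proposition~\ref{P3} upgrades this to the Haar property for every $w^*$-closed subspace of $X^*$. Now let $Y \subseteq X$ be an arbitrary subspace. The annihilator $Y^\perp$ is $w^*$-closed (it is the intersection of the $w^*$-closed hyperplanes $\{g \in X^* : g(y) = 0\}$ as $y$ runs through $Y$), hence has the Haar property. Applying Theorem~\ref{th1}(a) in the reverse direction, $Y$ has property-SNP, finishing the equivalence.

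I expect the proof to be mostly bookkeeping, with the only nontrivial step being the invocation of the bipolar theorem to realize every $w^*$-closed subspace as an annihilator; this is the precise place where moving from \emph{SNP of some subspaces} to \emph{Haar property of lines} uses that we are quantifying over \emph{all} subspaces (rather than, say, only closed ones). No new seminorm calculation beyond what Theorem~\ref{th1} and Proposition~\ref{P3} already provide should be needed.
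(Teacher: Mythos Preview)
Your proof is correct and the converse direction is identical to the paper's. For the forward direction (``every subspace has SNP $\Rightarrow$ every line in $X^*$ has Haar'') the paper takes a slightly more direct path: given a line $L\subseteq X^*$, it translates $L$ to a one-dimensional subspace $\mathbb{R}f$, notes that $\mathbb{R}f=(\ker f)^\perp$, applies the hypothesis to the hyperplane $\ker f$, and invokes Theorem~\ref{th1}(a) once. You instead first establish Haar for \emph{every} $w^*$-closed subspace of $X^*$ via the bipolar identity $W=(W_\perp)^\perp$ and then cite the trivial implication $(b)\Rightarrow(a)$ of Proposition~\ref{P3}. Your route is a harmless generalization --- the paper's argument is exactly the one-dimensional case of your bipolar step --- but the bipolar theorem is not strictly needed here, since only annihilators of hyperplanes (i.e., one-dimensional subspaces of $X^*$) are required to reach lines.
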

\begin{proof}
Suppose that each line in $X^*$ has the Haar property in $X^*$. Let $Y$ be a subspace of $X$. From Proposition~\ref{P3} it follows that $Y^\perp$ has the Haar property in $X^*$. Consequently, from Theorem~\ref{th1}, $Y$ has the property-SNP.

Conversely, assume that each subspace of $X$ has the property-SNP. Let $L$ be a line in $X^*$. Hence, there exists a $g\in X^*$ such that $L+g$ is a line that passes through the origin. Suppose that $L+g=\mb{R}f$ for some $f\in X^*\sm\{0\}$. Clearly $\ker (f)$ has the property-SNP in $X$ and consequently, by using Theorem~\ref{th1}, $L+g$ has the Haar property in $X^*$. This completes the proof.
\end{proof}

We now present a general result that identifies a class of LCS and their subspaces with the property-SNP but fails to satisfy the property-USNP. Recall that if $E$ is a Banach space, then the weak topology on $E$ is induced by the family of seminorms 
$\mc{F}=\{\rho_F: F\ci B_{E^*}\mbox{ is finite}\}$, where $\rho_F(x)=\max\{|f(x)|: f\in F\} \text{ for all } x\in E$ as stated in the Introduction.

\begin{thm}\label{SNP and U}  
Let $M$ be a subspace of a Banach space $(E, \|\cdot\|)$. Then $M$ has property-U in $E$ if and only if $M$ as a subspace of $(E, w)$ has the property-SNP with respect to the generating family $\mc{F}$ defined above.
\end{thm}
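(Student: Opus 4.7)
The plan is to translate the HBE condition for the seminorms $\rho_F(x) = \max_{g \in F}|g(x)|$ (where $F \subseteq B_{E^*}$ is finite) into the classical norm-preservation condition in $E$. The crux is the identification
\[
\chi_{\rho_F}^E(\tilde{g}) = \|\tilde{g}\| \qquad \text{whenever } \rho_F \succeq \rho_{\{\tilde{g}/\|\tilde{g}\|\}},
\]
for $\tilde{g} \in E^* \setminus \{0\}$. The upper bound comes from evaluating the hypothesis $|\tilde{g}(x)| \leq \|\tilde{g}\|\rho_F(x)$ on the set $\{x : \rho_F(x) \leq 1\}$, while the lower bound is immediate from the containment $B_E \subseteq \{x : \rho_F(x) \leq 1\}$, which holds because $F \subseteq B_{E^*}$. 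The same book-keeping yields $\chi_{\rho_F}^M(f) = \|f\|_{M^*}$ whenever $f \in M^*$ admits a norm-preserving extension $\tilde{g} \in E^*$ with $\rho_F \succeq \rho_{\{\tilde{g}/\|\tilde{g}\|\}}$.

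For the forward direction, I assume $M$ has property-U in $E$ and fix a nonzero $f \in M^*$ together with its unique norm-preserving extension $\tilde{f} \in E^*$. I take $\mu := \rho_{F_0}$ with $F_0 = \{\tilde{f}/\|\tilde{f}\|\}$. For any $\rho_F \in \mc{F}_\mu$, the key identification gives $\chi_{\rho_F}^E(\tilde{f}) = \|f\| = \chi_{\rho_F}^M(f)$, so $\tilde{f}$ is an HBE of the pair $(f, \rho_F)$. For uniqueness, any HBE $\tilde{g}$ of $(f, \rho_F)$ satisfies $\|\tilde{g}\| \leq \chi_{\rho_F}^E(\tilde{g}) = \|f\|$ (using $B_E \subseteq \{\rho_F \leq 1\}$) together with $\|\tilde{g}\| \geq \|f\|$ (since $\tilde{g}$ extends $f$), so $\tilde{g}$ is a norm-preserving extension and property-U forces $\tilde{g} = \tilde{f}$.

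For the converse I argue by contrapositive. Suppose $M$ lacks property-U and choose $f \in M^*$ with distinct norm-preserving extensions $\tilde{f}_1, \tilde{f}_2 \in E^*$. Given an arbitrary $\mu = \rho_{F_0} \in \mc{F}$, set $F_i := F_0 \cup \{\tilde{f}_i/\|\tilde{f}_i\|\}$ for $i = 1, 2$; then $\rho_{F_i} \succeq \mu$, and the key identification applied with $F_i$ certifies that $\tilde{f}_i$ is an HBE of $(f, \rho_{F_i})$. Since $\tilde{f}_1 \neq \tilde{f}_2$ and $\mu$ was arbitrary, Remark~\ref{R1} delivers the failure of property-SNP for $M$ in $(E, w)$ with respect to the family $\mc{F}$.

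The only substantive step is the key identification bridging the $\chi_{\rho_F}^E$-supremum and the Banach norm $\|\tilde{g}\|$; once this is in place, both implications come down to unwrapping the definitions, invoking property-U in one direction and Remark~\ref{R1} in the other. I would also expect to remark that the particular choice of $\mc{F}$ (with $F \subseteq B_{E^*}$) is essential: the containment $B_E \subseteq \{\rho_F \leq 1\}$, which is what produces the lower bound $\chi_{\rho_F}^E(\tilde{g}) \geq \|\tilde{g}\|$, relies on this normalization and would fail for the larger family indexed by arbitrary finite subsets of $E^*$.
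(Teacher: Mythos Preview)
Your proposal is correct and follows essentially the same approach as the paper: both proofs hinge on the identity $\chi_{\rho_F}^E(\tilde g)=\|\tilde g\|$ (and its $M$-analogue) once a normalized norm-preserving extension lies in $F$, obtained from the two inclusions $B_E\subseteq\{\rho_F\le 1\}$ and $|\tilde g|\le\|\tilde g\|\,\rho_F$. The only cosmetic difference is that for the implication ``property-$U$ $\Rightarrow$ property-SNP'' the paper argues by contrapositive (starting from the failure of SNP and producing two norm-preserving extensions), whereas you give a direct argument fixing $\mu=\rho_{\{\tilde f/\|\tilde f\|\}}$ and verifying uniqueness for every $\rho_F\succeq\mu$; the underlying computation is identical.
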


\begin{proof} 
Suppose $M$ does not have property-U in $E$. Then there exists an $f\in S_{M^*}$ which has two distinct HBEs $f_1, f_2\in S_{E^*}$. Let $F\ci B_{E^*}$ be any finite set with $\chi_{\rho_F}^M(f)<\iy$. 

{\sc Claim:~} There exist distinct HBEs $f_1, f_2$ of the pair $(f,\rho_{F_1})$, for some $F_1\supseteq F$, $F_1\ci B_{E^*}$ finite. 

Consider $F_1=F\cup \{f_1,f_2\}$. It is enough to show that the pair $(f,\rho_{F_1})$ has two distinct HBEs $f_1,f_2$. Clearly, $f_1|_Y=f=f_2|_M$. Since $F_1\ci B_{E^*}$, we have $1=\|f_1\|=\|f\|\leq \chi_{\rho_{F_1}}^M(f)\leq \chi_{\rho_{F_1}}^E(f_1)$. As $f_1\in F_1$, $\chi_{\rho_{F_1}}^E(f_1)\leq 1$. Thus, $\chi_{\rho_{F_1}}^E(f_1)=\chi_{\rho_{F_1}}^M(f)=1$. Similarly, we can prove that $\chi_{\rho_{F_1}}^E(f_2)=\chi_{\rho_{F_1}}^M(f)=1$. This concludes that $M$ does not have the property-SNP.
	
Conversely, suppose $M$ does not have the property-SNP. It follows from Remark~\ref{R1} that there exists a $f\in M^*$ such that for every finite set $F\ci B_{E^*}$, we have finite sets $F_1,F_2\supseteq F$ ($F_1$ may be equal to $F_2$) and two distinct HBEs $f_1, f_2\in E^*$ of the pairs $(f,\rho_{F_1}), (f,\rho_{F_2})$ satisfying 
\[
	\chi_{\rho_{F_1}}^E(f_1)=\chi_{\rho_{F_1}}^M(f) \mbox{~and~}
	\chi_{\rho_{F_2}}^E(f_2)=\chi_{\rho_{F_2}}^M(f).
\]
Without loss of generality, assume that $\|f\|=1$. Now, let $\widetilde{f}\in S_{E^*}$ be a HBE of $f$, in the usual sense. Consider $F=\{\widetilde{f}\}$. From our assumption on $M$, we have $F_1,F_2\supseteq F$ and $f_1,f_2\in E^*$ such that the above holds. It is now easy to prove that $\chi_{\rho_{F_1}}^E(f_1)=\chi_{\rho_{F_1}}^M(f)=\|f\|=\chi_{\rho_{F_2}}^M(f)=\chi_{\rho_{F_2}}^E(f_2)$. Since  $\|f_1\|\leq \chi_{\rho_{F_1}}^E(f_1)$, we have $\|f\|=\|f_1\|$. Similarly, we can prove that $\|f\|=\|f_2\|$. Therefore, $f\in M^*$ has two distinct norm-preserving extensions. Hence, $M$ does not have property-U. \end{proof}

\begin{Rem}\label{R2} 
In Theorem \ref{SNP and U}, we cannot replace $B_{E^*}$ by $E^*$ as property-U for a subspace $M$ always depends on the norm. Take $E=(\mathbb{R}^n,\|.\|_\iy)$ and $Z=(\mathbb{R}^n,\|.\|_1)$ for $n>1$, then one can see that $E^*=Z^*$ as a set. However, they can have different subspaces which have property-U.
\end{Rem}

We now show that we cannot replace the property-SNP with the property-USNP in Theorem \ref{SNP and U}.

\begin{notn}
Let $M$ be a subspace of a normed space $E$ and $f\in M^*$. We define $extn(f)=\{h\in E^*:h|_M=f\}$.
\end{notn}

\begin{Pro}\label{P4}
    Let $Y$ be a subspace of an LCS $(X,\tau)$, where $\tau$ is induced by the family $\mathcal{P}$ of seminorms and $f\in Y^*$. Also let $(f,\rho)$ be a pair and $r>\chi_\rho^Y(f)$. Then there exist distinct $f_1,f_2\in extn(f)$ such that $\chi_\rho^X(f_1)=\chi_\rho^X(f_2)=r$.  
\end{Pro}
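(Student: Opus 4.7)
The plan is to reduce to a one-dimensional problem: parametrize linear extensions of $f$ along a single new direction and exploit the slack $r > c := \chi_\rho^Y(f)$ to produce two distinct extensions with $\chi_\rho$-value exactly $r$. First, pick $x_0 \in X \sm Y$ with $\rho(x_0) > 0$. Such an $x_0$ exists unless $\rho \equiv 0$ on $X$; indeed, if $\rho$ vanished on every $x \notin Y$, then for any $y \in Y$ and any fixed $x_0 \notin Y$, the element $y + x_0$ also lies outside $Y$, giving $\rho(y) \leq \rho(y + x_0) + \rho(x_0) = 0$ and hence $\rho \equiv 0$. In that degenerate case the hypothesis $\chi_\rho^Y(f) < \iy$ forces $f = 0$ and the proposition is vacuous. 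Set $Y_0 := Y + \mb{R}x_0$ and, for each $\al \in \mb{R}$, let $g_\al : Y_0 \ra \mb{R}$ be the unique linear extension of $f$ with $g_\al(x_0) = \al$.

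The next step is to analyze $\psi(\al) := \chi_\rho^{Y_0}(g_\al) \in [0, \iy]$. Four properties are needed: $\psi$ is convex, being the supremum over admissible $(y, t)$ of the convex functions $\al \mapsto |f(y) + t\al|$; $\psi \geq c$, since $Y \ci Y_0$ and $g_\al|_Y = f$; $\psi(\al) \geq |\al|/\rho(x_0)$, obtained by plugging $y = 0$ and $t = 1/\rho(x_0)$ into the defining supremum, so $\psi(\al) \ra \iy$ as $|\al| \ra \iy$; and $\psi$ attains the value $c$. The fourth property is the crux and is where Lemma~\ref{l2} enters: it yields an HBE $\widetilde{f} \in X^*$ of $(f, \rho)$ with $\chi_\rho^X(\widetilde{f}) = c$, and then $\al^* := \widetilde{f}(x_0)$ makes $g_{\al^*} = \widetilde{f}|_{Y_0}$, so $\psi(\al^*) \leq \chi_\rho^X(\widetilde{f}) = c$, forcing $\psi(\al^*) = c$.

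Convex functions on $\mb{R}$ are continuous, so the intermediate value theorem applied on each side of $\al^*$ produces $\al_1 < \al^* < \al_2$ with $\psi(\al_1) = \psi(\al_2) = r$. For $i = 1, 2$, the Hahn-Banach theorem for LCS, with dominating seminorm $r\rho$, extends $g_{\al_i}$ to $f_i : X \ra \mb{R}$ with $|f_i| \leq r\rho$; continuity of $\rho$ ensures $f_i \in X^*$, and the bound gives $\chi_\rho^X(f_i) \leq r$, while $\chi_\rho^X(f_i) \geq \chi_\rho^{Y_0}(g_{\al_i}) = r$ is immediate. Since $f_i(x_0) = \al_i$ and $\al_1 \neq \al_2$, the extensions $f_1, f_2$ are distinct. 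The main obstacle is verifying that $\psi$ actually attains its minimum value $c$ (otherwise the IVT might only give a single crossing of the level $r$ instead of two); this is precisely what the $c$-preserving HBE from Lemma~\ref{l2} supplies.
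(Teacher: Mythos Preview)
Your argument has a genuine gap: $\psi$ need not be finite-valued, so the continuity/IVT step can fail. Writing $g_\alpha = g_{\alpha^*} + (\alpha - \alpha^*)h$ with $h|_Y = 0$ and $h(x_0) = 1$, one has $\psi(\alpha)<\infty$ for $\alpha\neq\alpha^*$ iff $\chi_\rho^{Y_0}(h)<\infty$, and a short computation gives $\chi_\rho^{Y_0}(h)=1/d_\rho(x_0,Y)$. So what you actually need is $d_\rho(x_0,Y)>0$, which is strictly stronger than $\rho(x_0)>0$. Such an $x_0$ need not exist: take $X=\mb{R}^2$ with the directed family generated by $\rho_i(x)=|x_i|$ ($i=1,2$), $Y=\mb{R}e_1$, $\rho=\rho_1$, and $f(t,0)=t$. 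Then $\chi_{\rho_1}^Y(f)=1$, every $x_0\notin Y$ has $d_{\rho_1}(x_0,Y)=0$, and in fact the \emph{only} extension $F\in X^*$ with $\chi_{\rho_1}^X(F)<\infty$ is $F(x_1,x_2)=x_1$, with value $1$. No extension attains any $r>1$, so the proposition as literally stated fails here. (The degenerate case $\rho\equiv 0$ you flagged has the same defect: the conclusion is not vacuous there, it is simply false.)

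The paper's proof uses the same one-parameter/IVT idea but perturbs inside $X^*$ rather than on $Y_0$: it fixes a nonzero $g\in Y^\perp$, passes to a larger $\mu\in\mc{P}$ with $\mu\succeq\rho$ for which $\chi_\mu^X(g)$ is finite and nonzero, takes an HBE $\widetilde{f}$ of $(f,\mu)$, and runs IVT on $\alpha\mapsto\chi_\mu^X(\widetilde{f}+\alpha g)$, which is then automatically finite-valued and continuous. Note, though, that the conclusion it reaches is $\chi_\mu^X(f_i)=r$, not $\chi_\rho^X(f_i)=r$; the paper has quietly replaced $\rho$ by the larger $\mu$, so it too does not establish the statement as written. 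Your route becomes correct for the $\rho$-conclusion exactly when $Y$ is not $\rho$-dense in $X$: then choose $x_0$ with $d_\rho(x_0,Y)>0$, and your $\psi$ is finite (hence continuous) everywhere.
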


\begin{proof}
    Let $g(\neq 0)\in Y^\perp$ and $\rho'\in\mathcal{P}$ such that $0\neq\chi_{\rho'}^X(g)<\infty$. Now choose $\mu\geq max\{\rho,\rho'\}$, then $\chi_\mu^X(g)$ and $\chi_\mu^Y(f)$ are finite. So there exists an HBE $\widetilde{f}$ of the pair $(f,\mu)$. Without loss of generality, assume that $\chi_\mu^X(\widetilde{f})=\chi_\mu^Y(f)=1$. Clearly, $extn(f)\supseteq \{\widetilde{f}+\al g\}$ and define $\{\widetilde{f}+\al g:\al\in\mathbb{R}\}=L$. It remains to prove that for $r>1$ there exist two distinct $f_1,f_2\in L$ such that $\chi_\mu^X(f_1)=\chi_\mu^X(f_2)=r$. 

    Now define $\varphi :[0,\infty)\ra \mathbb{R}$ by $\varphi(\al)=\chi_\mu^X(\widetilde{f}+\al g)$ and $\psi:(-\iy,0]\ra \mb{R}$ by $\psi(\al)=\chi_\mu^X(\widetilde{f}+\al g)$. It is easy to observe that  both $\varphi, \psi$ are continuous and $\varphi(0)=\psi(0)=1$. So by IVP for $r>1$, there exists $\al_0>0$ and $\beta_0<0$ such that $\chi_\mu^X(\widetilde{f}+\al_0 g)=\chi_\mu^X(\widetilde{f}+\beta_0 g)=r$. Clearly $\widetilde{f}+\al_0 g,\widetilde{f}+\beta_0 g\in L$ are distinct. Hence, the result follows.
\end{proof}

As an immediate consequence of Proposition~\ref{P4}, we have the following.

\begin{Cor}\label{cor1}
Let $M$ be a subspace of a normed space $E$ and $f\in M^*$. Then,
\bla
\item there exist $f_1, f_2\in extn(f)$ such that $f_1\neq f_2$  and $\|f_1\|=\|f_2\|$.  
\item for $r>\|f\|$, there exist at least two distinct $\widetilde{f}_1, \widetilde{f}_2\in extn(f)$ such that $\|\widetilde{f}_i\|=r$, $i=1,2$.
\el
\end{Cor}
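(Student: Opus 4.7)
The plan is to specialize Proposition~\ref{P4} to the normed setting. I take $(X,\tau) = (E,\|\cdot\|)$ viewed as an LCS generated by the singleton family $\mathcal{P} = \{\|\cdot\|\}$, and set $Y = M$ (with $M$ implicitly a proper subspace, since otherwise $extn(f) = \{f\}$ and the statement is vacuously false). Choosing $\rho = \|\cdot\|$, the unit ball $B_E^\rho$ is exactly $B_E$, so for any $h \in E^*$ we have $\chi_\rho^E(h) = \|h\|_{E^*}$ and, likewise, $\chi_\rho^M(f) = \|f\|_{M^*}$. Thus, in this setting, an HBE of the pair $(f,\rho)$ is precisely a norm-preserving extension of $f$, and the numerical statement ``$\chi_\rho^E(\widetilde{f}) = r$'' in Proposition~\ref{P4} translates to ``$\|\widetilde{f}\|_{E^*} = r$''.

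For part $(b)$, fix $r > \|f\|$. To invoke Proposition~\ref{P4} I need a non-zero $g \in M^\perp$ with $\chi_{\rho'}^E(g) < \infty$ for some $\rho' \in \mathcal{P}$. Since $\mathcal{P} = \{\|\cdot\|\}$, the finiteness condition is automatic for any $g \in E^*$, and the existence of a non-zero $g \in M^\perp$ follows from the classical Hahn-Banach theorem applied to any $x_0 \in E \setminus \overline{M}$ (if $M$ fails to be closed, continuous functionals vanishing on $M$ automatically vanish on $\overline{M}$, so the same $g$ works). Proposition~\ref{P4} then immediately produces distinct $\widetilde{f}_1, \widetilde{f}_2 \in extn(f)$ with $\chi_\rho^E(\widetilde{f}_i) = r$, which by the identification above reads $\|\widetilde{f}_i\|_{E^*} = r$.

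For part $(a)$, the conclusion is an immediate consequence of $(b)$: pick any $r > \|f\|$ and let $f_1, f_2$ be the two distinct norm-$r$ extensions supplied by $(b)$; then $f_1 \neq f_2$ and $\|f_1\| = \|f_2\| = r$, as required.

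There is no substantive obstacle here; the work is purely bookkeeping. One must confirm that $\chi_\rho$ specializes to the operator norm on both $E^*$ and $M^*$, and that the auxiliary hypothesis of Proposition~\ref{P4} concerning the existence of $g \in Y^\perp$ is automatically met for a proper subspace of a normed space via Hahn-Banach. Once these identifications are in place, the corollary is a direct restatement of the proposition in the norm topology.
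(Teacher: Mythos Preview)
Your proposal is correct and follows exactly the route the paper intends: the paper records the corollary simply as ``an immediate consequence of Proposition~\ref{P4}'', and you have spelled out that specialization (taking $\mc{P}=\{\|\cdot\|\}$ so that $\chi_\rho$ becomes the operator norm) together with the observation that part~$(a)$ follows from part~$(b)$. Your extra care in noting that a nonzero $g\in M^\perp$ is needed---hence $M$ must be a proper, non-dense subspace---is a point the paper leaves implicit both in Proposition~\ref{P4} and in the corollary.
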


\begin{thm}
Let $M$ be a subspace of a Banach space $E$. Then $M$ can not have the property-USNP in $E$ with respect to the weak topology on $E$.
\end{thm}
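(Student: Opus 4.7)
The plan is to construct, for any proper nonzero subspace $M$ of $E$, an $f\in M^*$ together with two seminorms $\rho_{F_1},\rho_{F_2}\in\mc{F}$ whose pairs $(f,\rho_{F_1})$ and $(f,\rho_{F_2})$ admit \emph{distinct} unique HBEs---which by Remark~\ref{R1} suffices to rule out USNP. (The cases $M=E$ and $M=\{0\}$ are trivial, as extensions are automatic there.) The construction begins by fixing a nonzero $f\in M^*$, any extension $\widetilde f\in E^*$ of $f$, and a nonzero $g\in M^{\perp}\subseteq E^*$, which exists because $M\neq E$. Rescaling $\widetilde f, g, f$ by the common positive constant $1/\max(\|\widetilde f\|,\|\widetilde f+g\|)$ lets us assume both $\widetilde f$ and $\widetilde f+g$ lie in $B_{E^*}$.

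Set $F_1=\{\widetilde f\}$ and $F_2=\{\widetilde f+g\}$, so that $\rho_{F_1}(x)=|\widetilde f(x)|$ and $\rho_{F_2}(x)=|(\widetilde f+g)(x)|$ on $E$. Picking $y_0\in M$ with $\widetilde f(y_0)=1$ (possible since $f\neq 0$) immediately gives $\chi_{\rho_{F_1}}^M(f)=1=\chi_{\rho_{F_1}}^E(\widetilde f)$, so $\widetilde f$ is an HBE of $(f,\rho_{F_1})$; symmetrically $\widetilde f+g$ is an HBE of $(f,\rho_{F_2})$, and both seminorms lie in $\mc{F}_f$.

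The main step is the uniqueness of each HBE. If $h\in E^*$ satisfies $\chi_{\rho_{F_1}}^E(h)<\iy$, then $h$ must vanish on $\ker\widetilde f$: for any $x\in\ker\widetilde f$ one has $\rho_{F_1}(tx)=0$ for every scalar $t$, so the finiteness of $\chi_{\rho_{F_1}}^E(h)$ forces $th(x)$ to stay bounded in $t$, yielding $h(x)=0$. Since $\ker\widetilde f$ has codimension one, this forces $h=c\widetilde f$ for some scalar $c$; the HBE condition $\chi_{\rho_{F_1}}^E(h)=1$ then gives $|c|=1$, and $h|_M=f$ pins down $c=1$. Hence $\widetilde f$ is the unique HBE of $(f,\rho_{F_1})$, and the identical argument shows $\widetilde f+g$ is the unique HBE of $(f,\rho_{F_2})$. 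Because $g\neq 0$, these HBEs differ, which violates property-USNP. The only delicate point is this kernel-containment observation; once it is in place, the rest is routine verification and makes clear why Theorem~\ref{SNP and U} cannot be upgraded from SNP to USNP under the family $\mc{F}$.
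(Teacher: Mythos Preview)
Your argument is correct and takes a genuinely different route from the paper's. The paper invokes Corollary~\ref{cor1} (itself resting on Proposition~\ref{P4} and an intermediate-value argument) to produce, for a norm-attaining $f\in S_{M^*}$ and some $r>1$, two distinct extensions $f_1,f_2$ of equal norm $r$; setting $F=\{f_1/r,\,f_2/r\}$, it shows both $f_1$ and $f_2$ are HBEs of the \emph{single} pair $(f,\rho_F)$, contradicting condition~(i) of Definition~\ref{D3}(b). You instead exploit the elementary observation that when $F=\{h\}$ is a singleton, any functional with finite $\chi_{\rho_F}^E$-value must vanish on $\ker h$ and hence be a scalar multiple of $h$; this gives two pairs $(f,\rho_{F_1})$ and $(f,\rho_{F_2})$ whose \emph{unique} HBEs differ, contradicting condition~(ii). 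Your route is more self-contained---no appeal to Proposition~\ref{P4} or to norm-attainment of $f$---and makes the mechanism (singleton seminorms rigidify the extension) transparent. One small caveat: in the degenerate cases $M=E$ and $M=\{0\}$ the property-USNP actually \emph{holds}, so both the paper's statement and your proof tacitly require $M$ to be proper and nonzero; your parenthetical would be clearer if it said so outright rather than calling these cases ``trivial''.
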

\begin{proof}
Let $f\in M^*$ be such that $\|f\|=1$. Suppose $f$ is norm attaining, that is $f(y)=1$ for some $y\in S_M$. By Corollary~\ref{cor1}, for $r> 1$, there exist $f_1,f_2\in E^*$ with $f_1|_M=f=f_2|_M$ and $\|f_1\|=\|f_2\|=r$. Denote $F=\{\frac{f_1}{r},\frac{f_2}{r}\}$ and consider $B_E^{\rho_F} (=\rho_F^{-1}[0,1])$. Clearly $\chi_{\rho_F}^E(f_1)=r=\chi_{\rho_F}^E(f_2)$ and $ry\in B_E^{\rho_F}$. Clearly, $\chi_{\rho_F}^E(f)\leq r$ and $f(ry)=r$, so $\chi_{\rho_F}^E(f)=r$. Hence, $f_1$ and $f_2$ are two distinct HBEs of the pair $(f,\rho_F)$. Hence $M$ does not have the property-USNP in $E$ with respect to weak topology.
\end{proof}    

\section{$Y_{lcs}^\#$ and property-SNP}

Let $M$ be a subspace of a Banach space $E$. Let us recall the set $M^\#$ introduced by Oja in \cite{EO} in relation to study property-U of $M$, where $M^\#=\{f\in E^*:\|f\|=\|f|_Y\|\}$.
This section defines $Y_{lcs}^\#$ and examines certain vector space properties of it and its connection with the property-SNP.

Suppose $(X, \tau)$ be an LCS and $\tau$ is induced by a directed family $\mc{P}$ of seminorms. For a subspace $Y$ of $(X, \tau)$ and $\mu\in \mc{P}$, we define 
$$Y^{\#}_\mu= \left\lbrace f\in X^*: \chi_\rho^X(f)=\chi_\rho^Y(f|_Y)<\iy \text{ for all } \rho\in\mc{P}_\mu \right\rbrace.$$  

\begin{Pro}\label{Y coicides in normed space} 
Let $M$ be a subspace of a Banach space $E$. Then $M^\#=\bigcup_{\mu\in\mc{F}} M^\#_\mu,$ where $\mc{F}=\{\rho_F:F\ci B_{E^*} \mbox{~is finite}\}$.  
\end{Pro}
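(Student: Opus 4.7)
The plan is to verify $M^\# = \bigcup_{\mu \in \mc{F}} M^\#_\mu$ by proving each inclusion separately. For $M^\# \subseteq \bigcup_\mu M^\#_\mu$, take a nonzero $f \in M^\#$, set $c := \|f\| = \|f|_M\|$, and pick as witness $\mu := \rho_{F_0}$ with $F_0 := \{f/c\} \subseteq B_{E^*}$. To see $f \in M^\#_\mu$, I would fix an arbitrary $\rho_G \in \mc{P}_\mu$. The ordering $\rho_G \succeq \mu$ gives $B_E^{\rho_G} \subseteq B_E^\mu = \{x : |f(x)| \leq c\}$, hence $\chi_{\rho_G}^E(f) \leq c$; while $G \subseteq B_{E^*}$ gives $B_E \subseteq B_E^{\rho_G}$, hence $\chi_{\rho_G}^M(f|_M) \geq \|f|_M\| = c$. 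Combined with the trivial $\chi_{\rho_G}^M(f|_M) \leq \chi_{\rho_G}^E(f)$, all three quantities equal $c$, so $f \in M^\#_\mu$.

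For the reverse inclusion $\bigcup_\mu M^\#_\mu \subseteq M^\#$, I would argue by contradiction: take $f \in M^\#_\mu$ with $\mu = \rho_{F_0}$, and suppose $s := \|f|_M\| < \|f\| =: c$. The central idea is to augment $F_0$ by a normalized Hahn-Banach extension of $f|_M$. Pick $\tilde f \in E^*$ with $\tilde f|_M = f|_M$ and $\|\tilde f\| = s$; for $s > 0$, set $F_1 := F_0 \cup \{\tilde f / s\}$, so $F_1 \subseteq B_{E^*}$ and $\rho_{F_1} \succeq \mu$. Because $\tilde f = f$ on $M$, the added constraint $|\tilde f(y)|/s \leq 1$ becomes $|f(y)| \leq s$ for $y \in B_E^{\rho_{F_1}} \cap M$, and together with the lower bound from $B_M \subseteq B_E^{\rho_{F_1}} \cap M$ this yields $\chi_{\rho_{F_1}}^M(f|_M) = s$. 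Meanwhile $B_E \subseteq B_E^{\rho_{F_1}}$ forces $\chi_{\rho_{F_1}}^E(f) \geq c$, contradicting the defining identity $\chi_{\rho_{F_1}}^E(f) = \chi_{\rho_{F_1}}^M(f|_M)$ inherited from $f \in M^\#_\mu$. The degenerate case $s = 0$ (so $f|_M \equiv 0$) is handled by replacing $\tilde f / s$ with $f/c$: then $\chi_{\rho_{F_1}}^M(f|_M) = 0 < c \leq \chi_{\rho_{F_1}}^E(f)$, again a contradiction.

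The main obstacle I anticipate is locating the correct augmentation of $F_0$ in the reverse direction. The naive choice $F_1 := F_0 \cup \{f/c\}$ fails in the generic case $s > 0$, because an element $y \in M$ of large norm may still satisfy the $F_0$-constraints together with $|f(y)| \leq c$, so $\chi_{\rho_{F_1}}^M(f|_M)$ is not forced below $s$. Replacing $f$ by a norm-preserving Hahn-Banach extension $\tilde f$ of $f|_M$ is the precise trick that turns the added constraint into a uniform bound of $|f|$ by $s$ on $M$, while leaving $\chi_{\rho_{F_1}}^E(f) \geq \|f\|$ untouched, thereby producing the required asymmetry between $E$ and $M$.
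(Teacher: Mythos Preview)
Your argument is correct in both directions. For the inclusion $M^\#\subseteq\bigcup_\mu M^\#_\mu$ you do exactly what the paper does: take the singleton $F_0=\{f/\|f\|\}$ as the witnessing index and squeeze $\chi_{\rho_G}^M(f|_M)\le\chi_{\rho_G}^E(f)\le c\le\chi_{\rho_G}^M(f|_M)$ for every $\rho_G\succeq\rho_{F_0}$.

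For the inclusion $\bigcup_\mu M^\#_\mu\subseteq M^\#$ you take a genuinely different route. The paper argues directly: after normalizing $\|f|_M\|=1$ it sets $F_1=F\cup\{f\}$ and reads off the chain $1=\|f|_M\|\le\|f\|\le\chi_{\rho_{F_1}}^E(f)\le 1$. You argue by contradiction and adjoin instead a \emph{norm-preserving Hahn--Banach extension} $\tilde f/s$ of $f|_M$. This is not merely cosmetic: your $\tilde f/s$ lies in $B_{E^*}$ by construction, so the augmented $F_1$ is guaranteed to belong to $\mc{F}$ and the defining identity $\chi_{\rho_{F_1}}^E(f)=\chi_{\rho_{F_1}}^M(f|_M)$ from $f\in M^\#_\mu$ is legitimately available. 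The paper's choice $F_1=F\cup\{f\}$ needs $\|f\|\le 1$ in order that $F_1\subseteq B_{E^*}$ (and hence $\rho_{F_1}\in\mc{F}$, and hence $B_E\subseteq B_E^{\rho_{F_1}}$), which is exactly the inequality being proved; your Hahn--Banach trick sidesteps this circularity. Your separate handling of the degenerate case $s=0$ via $f/c$ also mirrors the paper's case split.
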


\begin{proof} 
Assume that $f\in M^\#_{\rho_F}$ for some finite $F\ci B_{E^*}$. First assume that $\|f|_M\|\neq 0$. Without loss of generality, assume that $\|f|_M\|=1$ (otherwise, consider $\frac{f}{\|f|_M\|}$). Then $\chi_{\rho}^E(f)=\chi_{\rho}^M(f|_M)$ for all $\rho\succeq \rho_F$. Note that for $F_1=F\cup\{f\}$, $\rho_{F_1}\succeq \rho_{F}$ and consequently, $\chi_{\rho_{F_1}}^E(f)=\chi_{\rho_{F_1}}^M(f|_M)$. It is easy to observe that 
$1=\|f|_M\|\leq \|f\|\leq \chi_{\rho_{F_1}}^E(f)\leq 1$. 
Hence, $\|f\|=\|f|_M\|$. Now, if $\|f|_M\|=0$, then $\chi_{\rho_{F_1}}^E(f)=\chi_{\rho_{F_1}}^M(f|_M)=0$. Since $\|f\|\leq \chi_{\rho_{F_1}}^E(f)$, $\|f\|=0$.

For the reverse inclusion, let $f\in M^\#$. Without loss of generality, assume that $\|f|_M\|=1=\|f\|$. Let $F=\{f\}$. Then for every finite set $F_1\supseteq F$, $F\ci B_{E^*}$ we have $1=\|f|_M\|\leq  \chi_{\rho_{F_1}}^M(f|_M)\leq 1$. Similarly, $1=\|f\|\leq  \chi_{\rho_{F_1}}^E(f)\leq 1$. Consequently, $1=  \chi_{\rho_{F_1}}^M(f|_M)=\chi_{\rho_{F_1}}^E(f)$. Hence, $f\in M^\#_F\ci M^\#$. \end{proof}

Proposition~\ref{Y coicides in normed space} motivates to define the following.

\bdfn
Suppose $(X, \tau)$ is an LCS such that $\tau$ is induced by a directed family $\mc{P}$ of seminorms. We define $Y_{lcs}^\#:=\bigcup_{\mu\in\mc{P}} Y^\#_\mu.$
\edfn

\begin{lemma}\label{Y is linear then ext is atmost 1} 
Suppose $Y$ is a subspace of an LCS $(X, \tau)$ where $\tau$ is induced by a directed family $\mc{P}$ of seminorms. Suppose that $Y_{lcs}^\#$ is linear. Then every $f\in Y^*$ and $\mu\in\mc{P}$, there exists at most one $\widetilde{f}\in Y_\mu^\#$ such that $\widetilde{f}|_Y=f$. 
\end{lemma}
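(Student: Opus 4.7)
The plan is to reduce the uniqueness claim to showing that the difference of two such extensions is identically zero, exploiting both the hypothesised linearity of $Y^\#_{lcs}$ and the directedness of $\mc{P}$.

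First, I would suppose $\widetilde{f}_1, \widetilde{f}_2 \in Y^\#_\mu$ both satisfy $\widetilde{f}_i|_Y = f$, and set $g := \widetilde{f}_1 - \widetilde{f}_2 \in X^*$, so that $g|_Y = 0$. Since $Y^\#_\mu \subseteq Y^\#_{lcs}$ and $Y^\#_{lcs}$ is by hypothesis a linear subspace of $X^*$, the functional $g$ belongs to $Y^\#_{lcs}$. Hence there exists $\nu \in \mc{P}$ with $g \in Y^\#_\nu$. The directedness of $\mc{P}$ now lets me pick $\rho \in \mc{P}$ satisfying $\rho \succeq \mu$ and $\rho \succeq \nu$; in particular $\rho \in \mc{P}_\nu$, so by the defining condition of $Y^\#_\nu$,
\[
\chi_\rho^X(g) \;=\; \chi_\rho^Y(g|_Y) \;=\; \chi_\rho^Y(0) \;=\; 0.
\]

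The remaining step is to observe that $\chi_\rho^X(g) = 0$ forces $g \equiv 0$ on $X$. Indeed, for any $x\in X$ with $\rho(x) > 0$, the element $x/\rho(x)$ lies in $B_X^\rho$, so $g(x/\rho(x))=0$ and hence $g(x) = 0$; while for $x$ with $\rho(x) = 0$, every scalar multiple of $x$ lies in $B_X^\rho$, again giving $g(x) = 0$. Thus $\widetilde{f}_1 = \widetilde{f}_2$.

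The argument is brief, but the one subtle point — and the only place the hypothesis on $Y^\#_{lcs}$ is invoked — is the transfer of the difference $g$ into some $Y^\#_\nu$. Without linearity of $Y^\#_{lcs}$ no such index $\nu$ is available, and the directed-family trick producing the common upper bound $\rho$ cannot be carried out. So the hypothesis is deployed in exactly the one spot where it is indispensable, and the proof is otherwise a routine unpacking of the definitions of $Y^\#_\mu$ and $\chi^X_\rho$.
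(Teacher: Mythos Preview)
Your proof is correct and follows essentially the same route as the paper: form the difference of two putative extensions, use linearity of $Y^\#_{lcs}$ to place it in some $Y^\#_{\nu}$, and conclude from $\chi^X_\rho(g)=\chi^Y_\rho(0)=0$ that $g=0$. Two minor remarks: the directedness step producing $\rho\succeq\mu,\nu$ is not actually needed (you may simply take $\rho=\nu$, since $\nu\in\mc{P}_\nu$), and the paper's version in fact argues with $\widetilde f_1\in Y^\#_{\mu_1}$, $\widetilde f_2\in Y^\#_{\mu_2}$ for possibly different $\mu_1,\mu_2$, yielding a slightly stronger uniqueness across all of $Y^\#_{lcs}$ rather than within a single $Y^\#_\mu$.
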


\begin{proof} 
If possible, assume that there exist $f_1\in Y_{\mu_1}^\#$ and $f_2\in Y_{\mu_2}^\#$ be two distinct extensions of $f$, for some $\mu_1, \mu_2\in\mc{P}$. Since $Y^\#$ is linear, $f_1-f_2\in Y^\#_{\mu_3}$ for some $\mu_3\in\mc{P}$. Consequently, $\chi_{\mu_3}^X(f_1-f_2)=\chi_{\mu_3}^Y((f_1-f_2)|_Y)=0$ as $\mc{P}$ is a directed family of seminorms. This concludes $f_1-f_2=0$. Hence, $f_1=f_2$. 
\end{proof}

\begin{Rem}  
Example~\ref{ex4} shows that there exists an LCS $X$ and for its subspace $Y$, $Y_{lcs}^\#=\{0\}$.
\end{Rem}

\begin{thm} 
Let $M$ be a subspace of a Banach space $E$ such that $M^\#$ is linear. Then $M$ as a subspace of $(E, w)$ has the property-SNP. 
\end{thm}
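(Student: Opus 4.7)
The plan is to reduce the assertion to Theorem~\ref{SNP and U}, which asserts that $M$ has property-U in the Banach space $(E,\|\cdot\|)$ if and only if $M$, viewed as a subspace of $(E,w)$, has the property-SNP with respect to the generating family $\mc{F}$. In view of this reduction, it suffices to prove that the linearity of $M^\#$ forces $M$ to have property-U in $E$; the conclusion then follows immediately.

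To establish property-U, I would argue by contradiction. Suppose some $f\in M^*$ admits two distinct norm-preserving extensions $g_1,g_2\in E^*$, so that $\|g_1\|=\|g_2\|=\|f\|$. Since $M^\#=\{g\in E^*:\|g\|=\|g|_M\|\}$, both $g_1$ and $g_2$ lie in $M^\#$, and the hypothesised linearity of $M^\#$ yields $g_1-g_2\in M^\#$. By the defining condition of $M^\#$ this gives
\[
\|g_1-g_2\|=\|(g_1-g_2)|_M\|=\|f-f\|=0,
\]
contradicting $g_1\neq g_2$. Hence every $f\in M^*$ has a unique norm-preserving extension, i.e.\ $M$ has property-U in $E$, and Theorem~\ref{SNP and U} then delivers property-SNP for $M$ in $(E,w)$.

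I do not anticipate any serious obstacle. The heart of the argument is the classical Phelps-type observation that linearity of $M^\#$ forces property-U, which is the three-line contradiction above. Proposition~\ref{Y coicides in normed space} already guarantees that the set $M^\#$ in the Banach sense coincides with $M^\#_{lcs}$ computed for the family $\mc{F}$ generating the weak topology, so no mismatch of conventions arises when we invoke Theorem~\ref{SNP and U}. The only step needing a brief aside is the trivial case $\|f\|=0$, where $f\equiv 0$ and the unique extension is plainly $\widetilde f\equiv 0$.
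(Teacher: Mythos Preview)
Your argument is correct and follows exactly the route indicated in the paper: reduce to Theorem~\ref{SNP and U} via the implication ``$M^\#$ linear $\Rightarrow$ $M$ has property-U,'' with Proposition~\ref{Y coicides in normed space} ensuring consistency of conventions. The only difference is cosmetic: the paper outsources the three-line contradiction to Oja~\cite{EO}, whereas you spell it out (which is in fact the same computation as in Lemma~\ref{Y is linear then ext is atmost 1} specialised to the norm case).
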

 
\begin{proof} It follows from Theorem \ref{SNP and U}, Proposition \ref{Y coicides in normed space} and \cite{EO}\end{proof}

We now extend the result in \cite[Theorem~2.1]{AL} concerning the set $M^\#$ in locally convex spaces.

\begin{thm}\label{th3}
Let $Y$ be a subspace of a LCS $(X,\tau)$ where $\tau$ is induced by a directed family of $\mc{P}$ seminorms . Consider the following.
\bla
\item $Y$ has the property-SNP in $X$.
\item  Every member of $X^*$ can be expressed as a unique sum of members of $Y_{lcs}^\#$ and $Y^\perp$.
\item  If $f_1,f_2\in Y_{lcs}^\#$ such that $f_1+f_2\in Y^\perp$, then $f_1+f_2=0$. 
\el
Then $(a)\Ra (b)\Ra (c)$. 
\end{thm}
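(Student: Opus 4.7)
My plan is to prove $(a) \Rightarrow (b)$ by invoking property-SNP on $f|_Y$ to produce a canonical extension living in $Y_{lcs}^\#$ and subtracting it off to recover the $Y^\perp$ part, and then to prove $(b) \Rightarrow (c)$ by exhibiting two competing decompositions of a single functional in $X^*$.

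For the existence half of $(a) \Rightarrow (b)$, I would start with an arbitrary $f \in X^*$. Lemma~\ref{lem1} produces some $\mu_0 \in \mc{P}$ with $\chi_{\mu_0}^X(f) < \iy$, hence $\chi_{\mu_0}^Y(f|_Y) < \iy$. Applying property-SNP to $f|_Y \in Y^*$ furnishes a seminorm $\nu \in \mc{P}$, which I may assume satisfies $\nu \succeq \mu_0$ by directedness, together with a single $\widetilde{f} \in X^*$ that is the unique HBE of $(f|_Y,\rho)$ for every $\rho\in\mc{P}_\nu$. Since $\chi_\rho^X(\widetilde{f}) = \chi_\rho^Y(\widetilde{f}|_Y) < \iy$ for every such $\rho$, we have $\widetilde{f}\in Y_{\nu}^\# \subseteq Y_{lcs}^\#$, and the decomposition $f = \widetilde{f} + (f-\widetilde{f})$ has its second summand in $Y^\perp$ because $\widetilde{f}$ extends $f|_Y$.

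For uniqueness, suppose $f = g_1 + h_1 = g_2 + h_2$ with $g_i \in Y_{\mu_i}^\#$ and $h_i \in Y^\perp$. Directedness lets me choose $\mu \in \mc{P}$ with $\mu \succeq \mu_1,\mu_2,\nu$, where $\nu$ is the SNP threshold for $f|_Y$. Since $g_i|_Y = f|_Y$ and $\mu \succeq \mu_i$, for every $\rho\in\mc{P}_\mu$ we have $\chi_\rho^X(g_i) = \chi_\rho^Y(g_i|_Y) = \chi_\rho^Y(f|_Y)$, making each $g_i$ an HBE of $(f|_Y,\rho)$. The uniqueness clause of SNP then forces $g_1 = \widetilde{f} = g_2$ and hence $h_1 = h_2$. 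The only mildly technical point is coordinating the three indices $\mu_1,\mu_2,\nu$ through directedness so that SNP applies to a common $\mu$.

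For $(b) \Rightarrow (c)$, I observe first that $Y_{lcs}^\#$ is closed under negation, since $\chi_\rho^X$ and $\chi_\rho^Y$ depend only on absolute values; in particular $-f_2 \in Y_{lcs}^\#$. Given $f_1, f_2 \in Y_{lcs}^\#$ with $f_1+f_2 \in Y^\perp$, the functional $f_1 \in X^*$ then admits the two decompositions
\[
f_1 \;=\; f_1 + 0 \;=\; (-f_2) + (f_1+f_2),
\]
each with a $Y_{lcs}^\#$ component and a $Y^\perp$ component. The uniqueness in $(b)$ forces $f_1 = -f_2$, that is $f_1 + f_2 = 0$. I do not anticipate any serious obstacle here; the whole argument rests on the SNP-guaranteed uniqueness of the Hahn-Banach extension together with the directedness of $\mc{P}$.
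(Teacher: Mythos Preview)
Your proof is correct and follows essentially the same route as the paper's: for $(a)\Rightarrow(b)$ you use the SNP-supplied unique HBE $\widetilde{f}$ as the $Y_{lcs}^\#$-component and invoke SNP-uniqueness (after passing via directedness to a common dominating seminorm) to rule out any alternative decomposition, and for $(b)\Rightarrow(c)$ you produce the same two competing decompositions $f_1 = f_1 + 0 = (-f_2) + (f_1+f_2)$. The only differences are cosmetic---you are slightly more explicit about invoking Lemma~\ref{lem1} and about coordinating the three thresholds $\mu_1,\mu_2,\nu$ through directedness.
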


\begin{proof}
$(a)\Ra (b).$ Suppose that $Y$ has the property-SNP and $f\in X^*$. There exists a $\mu\in\mc{P}$ such that every pair $(f|_Y,\rho)$ on $Y$ has a unique HBE $\hat{f}\in X^*$ for all $\rho\in\mc{P}_\mu$. This follows $\hat{f}\in Y_\mu^\#$. Consequently, $\hat{f}\in Y_{lcs}^\#$. Clearly, $f=\hat{f}+f-\hat{f}$. It remains to show that this decomposition is unique. If possible, let $f=g+h$ for some $g\in Y_{lcs}^\#$ and $h\in Y^\perp$. Then, $g\in Y_{\mu_1}^\#$ for some $\mu_1\in\mc{P}$. Note that $\chi_\rho^X(g)=\chi_\rho^Y(g|_Y)=\chi_\rho^Y(f|_Y)=\chi_\rho^X(\hat{f})$ for all $\rho\succeq\max \{\mu,\mu_1\}$. This implies that $\hat{f}=g$ and $f-\hat{f}=h$. Hence, the result follows.

$(b)\Ra (c).$ Suppose that $(c)$ is not true. Then there exist $f_1,f_2\in Y_{lcs}^\#$ such that, $f_1+f_2\in Y^\perp$ but $f_1+f_2\neq 0$. As $f_1,f_2\in Y_{lcs}^\#$, it follows that $\pm f_1,\pm f_2\in Y_{lcs}^\#$. Now we can write, 
$-f_2+(f_1+f_2)=f_1=f_1+0.$
Since $f_1\neq -f_2$ and $-f_2\in Y_{lcs}^\#$, we have two decompositions of $f_1$, hence $(b)$ is not true. 
\end{proof}

\begin{Rem}
\bla
\item In Theorem~\ref{th3}, $(c)$ does not imply $(a), (b)$. From Example~\ref{ex4} one can see that $Y_{lcs}^\#=\{0\}$, which follows that $Y$ has property $(c)$ in Theorem~\ref{th3} but does not have property $(a)$. Also, $Y$ may not always have the property $(b)$ otherwise $Y^\perp=X^*$, which may not be true. 
\item If we assume $Y_{lcs}^\#$ to be linear in condition $(b)$, then $(b)\Ra (a)$. This follows from the Lemma~\ref{Y is linear then ext is atmost 1}. 
\el
\end{Rem}

Proposition~\ref{Y coicides in normed space} and Theorem~\ref{th3} motivate us to raise the following problem.

\begin{prob}
Find all locally convex spaces $(X, \tau)$ and its subspaces $Y$ such that one of the statements specified in Theorem~\ref{th3} $(b)$ and $(c)$ can characterize the property-SNP of $Y$.
\end{prob}

We now study these properties in quotient spaces. The following example ensures that the property-SNP may not pass through the quotient spaces.

\begin{ex}
Consider $X=(\mb{R}^3,\rho,\mu), Y=\{(x,y,0):x,y\in\mathbb{R}\}$ and $Z=\{(x,0,0):x\in\mathbb{R}\}$, where $\rho=\|.\|_\iy$ and $\mu=\|.\|_1$. Then $(X/Z,\dot{\rho},\dot{\mu})$ coincide with $(X/Z,\|.\|_\iy,\|.\|_1)$ and $Y/Z$ is the $Y$-axis which does not have the property-SNP in $X/Z$ (see \cite[Theorem~2.3]{RP}). But $Y$ has the property-SNP in $X$.
\end{ex}

We have an affirmative answer for the property-USNP.
Recall that if $Z$ is a closed subspace of an LCS $(X, \tau)$ and $\tau$ is induced by a family $\mc{P}$ of seminorms, then the quotient topology $\pi(\tau)$ on the quotient space $X/Z$ is induced by the family $\dot{\mc{P}}=\{\dot{\rho}: \rho\in \mc{P}\},$ where $\dot{\rho}(x+Z)= \inf\{\rho(x-z): z\in Z\}$. For more details on a quotient space in locally convex spaces, we refer the readers to \cite{tvsnarici}.    

\begin{lemma}\label{lem2}
Suppose that $Z$ is a closed subspace of an LCS $(X, \tau)$ and $\rho\in\mc{P}$ where $\tau$ is induced by a family $\mc{P}$ of seminorms. Then for every $f\in (X/Z)^*$, we have $\chi_{\rho}^X(f\circ\pi_{X/Z})=\chi_{\dot{\rho}}^{X/Z}(f)$.\end{lemma}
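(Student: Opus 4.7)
The plan is to establish the equality by proving the two inequalities $\chi_\rho^X(f\circ \pi_{X/Z}) \leq \chi_{\dot\rho}^{X/Z}(f)$ and $\chi_{\dot\rho}^{X/Z}(f) \leq \chi_\rho^X(f\circ \pi_{X/Z})$ separately, relying only on the definitions of the extended seminorms and of the quotient seminorm $\dot\rho$. Write $g = f \circ \pi_{X/Z}$ throughout for brevity.

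For the easier inequality, I would use the fact that $\dot\rho(x+Z) \leq \rho(x)$ for every $x \in X$ (take $z = 0$ in the infimum defining $\dot\rho$). Thus $\rho(x) \leq 1$ implies $x+Z \in B_{X/Z}^{\dot\rho}$, giving $|g(x)| = |f(x+Z)| \leq \chi_{\dot\rho}^{X/Z}(f)$; taking the supremum over $x \in B_X^\rho$ yields $\chi_\rho^X(g) \leq \chi_{\dot\rho}^{X/Z}(f)$.

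For the reverse direction, I would exploit the fact that every coset in $B_{X/Z}^{\dot\rho}$ has representatives in $X$ with $\rho$-value arbitrarily close to $1$. Fix $x+Z$ with $\dot\rho(x+Z) \leq 1$ and for each $\varepsilon > 0$ choose $z_\varepsilon \in Z$ with $\rho(x-z_\varepsilon) \leq 1+\varepsilon$; then $f(x+Z) = g(x-z_\varepsilon)$, and scaling by $(1+\varepsilon)^{-1}$ brings $x - z_\varepsilon$ into $B_X^\rho$, which yields $|f(x+Z)| \leq (1+\varepsilon)\,\chi_\rho^X(g)$. Letting $\varepsilon \downarrow 0$ and passing to the supremum over admissible $x+Z$ gives $\chi_{\dot\rho}^{X/Z}(f) \leq \chi_\rho^X(g)$.

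The only subtle point is the degenerate situation in which some representative satisfies $\rho(x-z_\varepsilon)=0$, where the scaling argument breaks down. This is handled by a separate observation: whenever $\chi_\rho^X(g) < \infty$ and $\rho(y)=0$, applying the bound $|g(ty)|\leq \chi_\rho^X(g)$ for every $t>0$ (valid since $\rho(ty)=0\leq 1$) forces $g(y)=0$; and when $\chi_\rho^X(g)=\infty$ the desired inequality is vacuous. Since both $\chi_\rho^X$ and $\chi_{\dot\rho}^{X/Z}$ are extended seminorms, this case analysis is the main (minor) obstacle, but it makes the argument go through uniformly.
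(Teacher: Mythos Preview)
Your proof is correct and follows the same two-inequality strategy as the paper; the only difference is that the paper handles the reverse inequality by restricting to cosets with $\dot\rho(x+Z)<1$ and choosing $z$ with $\rho(x-z)<1$ directly, which sidesteps the $\varepsilon$-scaling and any limiting argument. Note also that your worry about the degenerate case $\rho(x-z_\varepsilon)=0$ is unnecessary: scaling by the fixed factor $(1+\varepsilon)^{-1}$ still lands $x-z_\varepsilon$ in $B_X^\rho$ regardless, so that final paragraph can simply be dropped.
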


\begin{proof} Since $\dot{\rho}(x+Z)\leq \rho(x)$,  $\chi_{\rho}^X(f\circ\pi_{X/Z})\leq\chi_{\dot{\rho}}^{X/Z}(f)$. Now, suppose $x\in X$ such that $\dot{\rho}(x+Z)<1$, then for $\e= \frac{1-\dot{\rho}(x+Z)}{2}$, there exists an $z\in Z$ such that $\rho(x-z)<\dot{\rho}(x+Z)+\e$. Which implies that $\rho(x-z)<1$. Note that $|f(x+Z)|=|f\circ \pi_{X/Z}(x-z)|\leq \chi_{\rho}^X(f\circ\pi_{X/Z})$. Thus, $\chi_{\dot{\rho_1}}^{X/Z}(f)\leq \chi_{\rho}^X(f\circ\pi_{X/Z})$. Hence, $\chi_{\dot{\rho}}^{X/Z}(f)= \chi_{\rho}^X(f\circ\pi_{X/Z})$.   
\end{proof}

\begin{thm}\label{th4}
Let $X, Z, \tau, \mc{P}$ be as above. Let $Y$ be a closed subspace of $X$. Then \tfae.
\bla
\item $Y$ has the property-USNP in $X$.
\item For every closed subspace $Z(\neq 0)$ of $Y,\;Y/Z$ has the property-USNP in $X/Z$.
\item For every nonzero hyperplane $Z$ in $Y$, $Y/Z$ has the property-USNP in $X/Z$.\el
\end{thm}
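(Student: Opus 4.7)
The plan is to establish the cyclic chain $(a)\Rightarrow (b)\Rightarrow (c)\Rightarrow (a)$, with Lemma~\ref{lem2} serving as the translation device between Hahn--Banach extensions on $X$ and on the quotient $X/Z$. The guiding observation is the following: whenever $Z\ci Y$ is closed and $f\in Y^*$ vanishes on $Z$, any extension $\widetilde f\in X^*$ of $f$ automatically annihilates $Z$ and hence descends to $\dot{\widetilde f}\in (X/Z)^*$; Lemma~\ref{lem2}, applied once in $X$ and once in $Y$, yields $\chi_\rho^X(\widetilde f)=\chi_{\dot\rho}^{X/Z}(\dot{\widetilde f})$ and $\chi_\rho^Y(f)=\chi_{\dot\rho}^{Y/Z}(\dot f)$. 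Thus HBEs of $(f,\rho)$ on $Y\hookrightarrow X$ correspond bijectively to HBEs of $(\dot f,\dot\rho)$ on $Y/Z\hookrightarrow X/Z$.

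For $(a)\Rightarrow(b)$, fix a closed subspace $Z(\neq 0)$ of $Y$ and take $\dot f\in (Y/Z)^*$ together with $\dot\rho_1,\dot\rho_2\in\dot{\mc P}_{\dot f}$. Lifting $\dot f$ to $f:=\dot f\circ\pi_{Y/Z}\in Y^*$, Lemma~\ref{lem2} gives $\rho_i\in\mc P_f$, and the property-USNP of $Y$ furnishes a unique common $\widetilde f\in X^*$ for the pairs $(f,\rho_1),(f,\rho_2)$; its descent $\dot{\widetilde f}$ is the required common HBE for $(\dot f,\dot\rho_1),(\dot f,\dot\rho_2)$, and uniqueness on the quotient side is inherited, since any competitor lifts back to an HBE of $f$ and must coincide with $\widetilde f$. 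The step $(b)\Rightarrow(c)$ is immediate, because a nonzero hyperplane $Z=\ker g$ (for $0\neq g\in Y^*$) is closed in $Y$, and since $Y$ is closed in $X$ it is also closed in $X$, so $Z$ is a legitimate subspace for (b).

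The direction $(c)\Rightarrow(a)$ is the crux. Given $f\in Y^*$ and $\rho_1,\rho_2\in\mc P_f$, the case $f=0$ is trivial (the unique HBE being $0$), so I assume $f\neq 0$ and set $Z=\ker f$, a nonzero hyperplane in $Y$ (the degenerate situation $\dim Y=1$, where $Z=\{0\}$ escapes the hypothesis of (c), will need to be flagged separately). Any two HBEs $\widetilde f_1,\widetilde f_2$ of $(f,\rho_1),(f,\rho_2)$ vanish on $Z$ and descend to $\dot{\widetilde f_1},\dot{\widetilde f_2}\in (X/Z)^*$; by Lemma~\ref{lem2} these become HBEs of $(\dot f,\dot\rho_1),(\dot f,\dot\rho_2)$ for $Y/Z\hookrightarrow X/Z$. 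Hypothesis (c) applied to the hyperplane $Z$ forces $\dot{\widetilde f_1}=\dot{\widetilde f_2}$, and the surjectivity of $\pi_{X/Z}$ then gives $\widetilde f_1=\widetilde f_2$. The principal obstacle is less conceptual than bookkeeping: one has to consistently verify closedness of $Z$ (so that $X/Z$ is a Hausdorff LCS), preservation of the finiteness condition $\chi_\rho<\infty$ under descent and lift, and commutation of the various quotient maps with restriction and extension, all of which devolve to Lemma~\ref{lem2} together with the closedness of $Y$ in $X$.
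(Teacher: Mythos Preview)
Your proposal is correct and follows essentially the same route as the paper: both arguments hinge on Lemma~\ref{lem2} to set up a bijection between HBEs of $(f,\rho)$ on $Y\hookrightarrow X$ and HBEs of $(\dot f,\dot\rho)$ on $Y/Z\hookrightarrow X/Z$ whenever $f|_Z=0$, and both pivot on $Z=\ker f$ for $(c)\Rightarrow(a)$. The only cosmetic difference is that the paper argues $(a)\Rightarrow(b)$ and $(c)\Rightarrow(a)$ by contraposition, whereas you argue them directly; the content is identical. Your observation that the case $\dim Y=1$ (where $\ker f=\{0\}$ is not a \emph{nonzero} hyperplane, so hypothesis~$(c)$ is vacuous) is a genuine edge case is well taken---the paper's proof passes over this point silently.
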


\begin{proof} $(a)\Rightarrow(b)$. Let $Z$ be a closed subspace of $Y$ such that $Y/Z$ does not have the property-USNP in $X/Z$. So, there exists an $f\in (Y/Z)^*$ for which there exist $\dot{\rho_1},\dot{\rho_2}\in\dot{\mc{P}}$ such that the pairs $(f,\dot{\rho_1})$ and $(f,\dot{\rho_2})$ have two distinct HBEs $g$ and $h$ on $X/Z$, respectively. Clearly, $f\circ \pi_{Y/Z}\in Y^*$ and $g\circ\pi_{X/Z},h\circ\pi_{X/Z}\in X^*$ and $g\circ\pi_{X/Z}\neq h\circ\pi_{X/Z}$. It is now enough to show that $g\circ\pi_{X/Z}$ and $h\circ\pi_{X/Z}$ are two HBEs of  the pairs $(f\circ\pi_{Y/Z},\rho_1)$ and $(f\circ\pi_{Y/Z},\rho_2)$, respectively. By Lemma~\ref{lem2}, we have $\chi_{\rho_1}^Y(f\circ\pi_{Y/Z})=\chi_{\dot{\rho_1}}^{Y/Z}(f)=\chi_{\rho_1}^X(g\circ\pi_{X/Z})=\chi_{\dot{\rho_1}}^{X/Z}(g)$ and  $\chi_{\rho_2}^Y(f\circ\pi_{Y/Z})=\chi_{\dot{\rho_2}}^{Y/Z}(f)=\chi_{\rho_2}^X(h\circ\pi_{X/Z})=\chi_{\dot{\rho_2}}^{X/Z}(h)$. This completes the proof.	
	
The implication $(b)\Rightarrow(c)$ is immediate.  
	
$(c)\Rightarrow(a)$. Assume that $Y$ does not have the property-USNP in $X$. There exists a non-zero $f\in Y^*$ and $\rho_1,\rho_2\in\mc{P}$ such that pairs $(f,\rho_1)$ and $(f,\rho_2)$ have two distinct HBEs $f_1$ and $f_2$, respectively. Consider $Z$=ker$(f)$. Then $Z\ci Y$ is a hyperplane in $Y$. Since $f|_Z=0$, all the maps $f_1\circ \pi_{X/Z}^{-1}, f_2\circ \pi_{X/Z}^{-1}$ and $f\circ\pi_{Y/Z}^{-1}$ are well defined. Note also that $f_1\circ \pi_{X/Z}^{-1}, f_2\circ \pi_{X/Z}^{-1}\in (X/Z)^*$ and $f\circ\pi_{Y/Z}^{-1}\in (Y/Z)^*$. Clearly, $f_1\circ\pi_{X/Z}^{-1}\neq f_2\circ\pi_{X/Z}^{-1}$. It is similar to the Lemma~\ref{lem2} that $\chi_{\dot{\rho}_1}^{Y/Z}(f\circ \pi_{Y/Z}^{-1})=\chi_{\rho_1}^Y(f)=\chi_{\rho_1}^X(f_1)=\chi_{\dot{\rho}_1}^{X/Z}(f_1\circ\pi_{X/Z}^{-1})$ and $\chi_{\dot{\rho}_2}^{Y/Z}(f\circ\pi_{Y/Z}^{-1})=\chi_{\rho_2}^Y(f)=\chi_{\rho_2}^X(f_2)=\chi_{\dot{\rho}_2}^{X/Z}(f_2\circ\pi_{X/Z}^{-1})$.
This shows that $Y/Z$ does not have the property-USNP in $X/Z$. Hence, the result follows.
\end{proof}

\section{Illustrative Examples}

Our first example shows that for a locally convex space $X$, and its subspace $Y$ it is possible that $Y_{lcs}^\#=\{0\}$.
\begin{ex}\label{ex4} 
Consider the LCS $(\mathbb{R}^2, \tau)$, where $\tau$ is induced by the directed family $\mc{P}=\{\rho_n:n\in\mathbb{N}\}$ of the seminorms given by the following.

$$\rho_n((x,y))=\Bigg\{\begin{array}{ll} n(|x|+|y|) & \text{ if $n=2^m$ for some $m$}\\ n(|x|+\frac{|y|}{2}) &\text{ if $n$ is odd}\\ n\max\{|x|, |y|\} & \text{ otherwise}.\end{array}$$ 

Consider $Y=\{(x, y)\in \mathbb{R}^2:x=y\}$ and $f\in Y^*$, where $f(x,x)=kx$ for some non-zero $k\in\mathbb{R}$. Let $\mu\in \mc{P}$ and $\phi\in X^*$ be an HBE of the pair $(f,\mu)$. Let $\mu=\rho_{n_0}$.
Hence we have $\phi|_Y=f$ and $\chi_\rho^Y(f)=\chi_\rho^X(\phi)<\iy$ for all $\rho\in\mc{P}_\mu$. We can assume that $\phi(x, y)=\alpha x+\beta y$ for all $x, y\in\mathbb{R}$. Since $\phi|_Y=f$, we have $\alpha+\beta=k$.

{\sc Step 1:~}Take $n\geq n_0$ such that $n=2^m$ for some $m$, and $\rho_n\in\mc{P}_\mu$. Then $$\frac{|k|}{2n}=\chi_{\rho_n}^Y(f)=\chi_{\rho_n}^X(\phi)=\frac{1}{n}\max\{|\alpha|, |\beta|\}.$$ 
    Consequently, $\alpha=\beta=\frac{k}{2}$. Thus, $\phi(x, y)=\frac{k}{2}(x+y)$ for all $x, y\in \mathbb{R}$.

{\sc Step 2:~}Take $n\geq n_0$ such that $n$ is odd and $\rho_n\in\mc{P}_\mu$. Then $$\frac{|2k|}{3n}=\chi_{\rho_n}^Y(f)=\chi_{\rho_n}^X(\phi)\geq\phi\left(\left(\frac{1}{2n}, \frac{1}{n}\right)\right)=\frac{3|k|}{4n}.$$  

Which is not possible. This shows that $Y^\#_\mu=\{0\}$. Consequently, $Y_{lcs}^\#=\{0\}$. 

{\sc Step3:~} For every even $n\in\mb{N}$ such that $n\neq 2^m$ for any $m$, the pair $(f, \rho_n)$ have two distinct HBEs $\phi_1$ and $\phi_2$ in $X^*$ given by 
\[
\phi_1((x,y))=kx, \mbox{ and }\phi_2((x,y))=\frac{k}{2}(x+y).
\]

Hence, $Y$ does not have the property-SNP though $Y_{lcs}^\#$ is linear. 
\end{ex}

For the remaining of this section we consider $Z$ as a Tychonoff space where the LCS $(C_p(Z),\tau)$ represents the topology of pointwise convergence on the set of all continuous functions on $Z$. It is well-known that $\tau$ is induced by the directed family 
\[
\mc{P}=\left\lbrace \rho_F: F\ci Z \text{ is finite}\right\rbrace
\]
of seminorms, where $\rho_F(f)=\max_{x\in F} |f(x)|$ for all $f\in C_p(Z)$.

\begin{ex}\label{finite max pointwise family} 
Consider the subspace $Y=\{f\in C_p(Z): f|_{A}=0\}$.  Now, suppose $(\phi, \rho_P)$ is any pair on $Y$ for some $\rho_P\in \mc{P}$ and $\phi(f)=\sum_{i=1}^{n} \alpha_if(x_i) $ for all $f\in Y$. Without loss of generality, we can assume that all $x_i\notin [0, 1]$ and no $\alpha_i=0$. Since $\chi_{\rho_P}^Y(\phi)<\iy$, each $x_i\in P$.  Consider $\widetilde{\phi}(f)=\sum_{i=1}^{n} \alpha_if(x_i) $ for all $f\in C_p(Z)$. Note that $\chi_{\rho_P}^Y(\phi)=\chi_{\rho_P}^X(\widetilde{\phi})=\sum_{i=1}^n|\alpha_i|$ and $\widetilde{\phi}|_Y=\phi$. Let $\psi\in (C_p(Z))^*$ is an extension of the pair $(\phi, \rho_P)$. Assume that $\psi=\sum_{j=1}^{m}\beta_jf(y_j)$ for all $f\in C_p(Z)$. Then $y_j\in P$ for all $j$ as $\chi_{\rho_P}^X(\psi)<\iy$. 
	
{\sc Step 1:} We prove that each $x_i\in \{y_j:1\leq j\leq m\}$. If $x_i\notin \{y_j:1\leq j\leq m\}$ for some $i$, then we can find an $f\in C_p(Z)$ such that $f|_{A\cup \{y_j:1\leq j\leq m\}}=0$ and $f(x_i)\neq 0$. Consequently, $0\neq \phi(f) =\psi(f)=0$. Which is not possible.
	
{\sc Step 2:} If for some $k$, $y_k \notin A$ is distinct from all $x_i$ and $\beta_k\neq 0$, then we can find an $f\in C_p(Z)$ such that $f|_{A\cup\{y_j:j\neq k\}\cup \{x_i: 1\leq i\leq n\} }=0$ and $f(y_k)=1$. Which implies that $\beta_k=\psi(f)=\phi(f)=0$. Which is not possible. Therefore, $y_k=x_i$ for some $i$ and $\beta_k=\psi(f)=\phi(f)=\alpha_i$.   
	
{\sc Step 3:} If $y_j\in A$ and $\beta_j\neq 0$ for some $j$, then $\sum_{j=1}^m|\beta_j|> \sum_{i=1}^n|\alpha_i|$ by using Step 1 and Step 2. Which is not possible as $\sum_{j=1}^m|\beta_j|=\chi_{\rho_P}^X(\psi)=\chi_{\rho_P}^Y(\phi)=\sum_{i=1}^n|\alpha_i|$. Consequently, $\psi=\widetilde{\phi}$. Hence, $Y$ has the property-USNP in $C_p(Z)$. 
\end{ex}

We next provide an example of a subspace that does not have the property-SNP.

\begin{ex}\label{finite sum pointwise family} 
Suppose that $A\ci Z$ is not empty and $Y$ be as above. It is well known that the directed family $\mc{P}'=\{\rho_F: F\ci Z \text{ is finite}\}$ induces the topology of the LCS $X=C_p(Z)$, where $\rho_F(f)=\sum_{x\in F}|f(x)|$ for all $f\in C(Z)$. We show that $Y$ does not have the property-SNP. Let $\phi(f)=\sum_{i=1}^{n} \alpha_if(x_i) $ for all $f\in Y$. Without loss of generality, we can assume that all $\alpha_i$ are non-zero and $x_i\notin A$ for all $1\leq i\leq n$. Let $\rho_F\in \mc{P}'$. Clearly, $\chi_{\rho_F}^Y(\phi)<\iy$ if and only if all $x_i\in F$. Note that $\chi_{\rho_F}^Y(\phi)=\max_{1\leq i\leq n}|\alpha_i|=|\alpha_k|$ (say). Consider $P=F\cup\{z\}$ for some $z\in A$. Clearly, $\rho_P\succeq \rho_F$ and $\chi_{\rho_P}^Y(\phi)=|\alpha_k|$.  Consider $$\widetilde{\phi}_1(f)=\sum_{i=1}^{n} \alpha_if(x_i) \text{ for all }f\in C(Z), \text{ and }$$
	$$\widetilde{\phi}_2(f)=\sum_{i=1}^{n} \alpha_if(x_i)+\frac{|\alpha_k|}{2}f(z)  \text{ for all }f\in C(Z).$$ Observe that $\chi_{\rho_P}^X(\widetilde{\phi}_1)=\chi_{\rho_P}^X(\widetilde{\phi}_2)= |\alpha_k|$ and $\widetilde{\phi}_1|_{Y_A}=\widetilde{\phi}_2|_{Y_A}=\phi$. Therefore, the pair $(\phi, \rho_P)$ has two distinct extensions. Hence, $Y_A$ does not have the property SNP.    
\end{ex}

\begin{ex}\label{ex3} 
Suppose that $A\ci Z$ is nonempty. Let us consider the following generating family of seminorms $$\mc{P}=\{\rho_F:F\ci Z \text{ is finite}\}\cup\{\mu_F:F\ci Z \text{ is finite}\}$$ for the LCS $C_p(Z)$, where $\rho_F(f)=|F|\max_{x\in F}|f(x)|,$ and $\mu_F(f)=|F|\sum_{x\in F}|f(x)|$ for all $f\in C(Z)$, here $|F|$ represents the cardinality of the set $F$. Let $Y$ be as above. Suppose $\phi\in (C_p(Z))^*$. Assume that $\phi=\sum_{i=1}^n\alpha_i \delta_{x_i}$ with $x_1, \cdots, x_k\in A$ and $x_{k+1}, \cdots, x_n\in Z\setminus A$. Consider $\phi_1=\sum_{i=k+1}^n\alpha_i \delta_{x_i}$ and $\phi_2=\sum_{i=1}^k\alpha_i \delta_{x_i}$. Note that for $F=\{x_i:1\leq i\leq n\}$, $\phi_1\in Y^\#_{\rho_F}$ and $\phi_2\in Y^\perp$ with $\phi=\phi_1+\phi_2$. It is similar to Example \ref{finite sum pointwise family} that for $z\in A$ and $F_1\supseteq F$, the pair $(\phi|_Y, \mu_{F_1})$ has two distinct extensions $\phi_1$ and 
$$\psi_1(f)=\sum_{i=1}^{n} \alpha_if(x_i)+ rf(z)  \text{ for all }f\in C(Z),$$ where $r=\max\{|\alpha_i|: k+1\leq i\leq n\}$. Thus, $Y$ does not have the property-SNP. It is similar to Example \ref{finite max pointwise family} that if $\psi_2\in (C_p(Z))^*$ is an extension of the pair $(\phi|_Y, \rho_{F_2})$ for some $F_2\supseteq F$, then $\psi_2$ has to be equal to $\phi_1$. Hence, every member of $(C_p(Z))^*$ can be written as a unique sum of members of $Y^\#$ and $Y^\perp$. 
\end{ex}

We now discuss the property-SNP for one-dimensional subspaces of $C_p(Z)$. 

\begin{Pro}\label{prop 1} 
Suppose $X=C_p(Z)$ and $f_0\in X$ such that $f_0$ does not attain its supremum value. Then $Y$= span$(f_0)$  does not have the property-SNP with respect to $\mc{P}=\{\rho_F:F\ci Z \text{ is finite}\}$, where $\rho_F(f)=\max_{x\in F}|f(x)|$ for all $f\in C(Z)$.
\end{Pro}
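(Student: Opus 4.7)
The plan is to apply Remark~\ref{R1} directly: we must exhibit a single $\phi\in Y^*$ so that for every admissible $\mu=\rho_{F_0}\in\mc{P}$ there are $\rho_1,\rho_2\in\mc{P}_\mu$ whose pairs with $\phi$ carry distinct Hahn--Banach extensions. Let $\phi\in Y^*$ be defined by $\phi(cf_0)=c$; the continuity of $\phi$ at scale $\rho_{F_0}$ amounts to $a_0:=\max_{x\in F_0}|f_0(x)|>0$, in which case $\chi_{\rho_{F_0}}^Y(\phi)=1/a_0$, so we restrict attention to such $F_0$.

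Because $f_0$ does not attain its supremum value, $a_0<M:=\sup_{z\in Z}|f_0(z)|$ for every finite $F_0$. Choose $z_1,z_2\in Z\sm F_0$ with $a_0<|f_0(z_1)|<|f_0(z_2)|<M$ and set $F_1=F_0\cup\{z_1\}$, $F_2=F_0\cup\{z_1,z_2\}$. Define candidate extensions $\widetilde{\phi}_i:=\delta_{z_i}/f_0(z_i)\in X^*$ for $i=1,2$; each clearly restricts to $\phi$ on $Y$ and satisfies $\chi_{\rho_{F_i}}^X(\widetilde{\phi}_i)=1/|f_0(z_i)|=\chi_{\rho_{F_i}}^Y(\phi)$, so it is an HBE of $(\phi,\rho_{F_i})$.

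The crux is to show that $\widetilde{\phi}_i$ is the \emph{unique} HBE of its pair. Every $\psi\in C_p(Z)^*$ is a finite linear combination of point evaluations (as already used throughout Section~4), and $\chi_{\rho_{F_i}}^X(\psi)<\iy$ forces its support to lie inside $F_i$ (otherwise, by the Tychonoff property, one can construct a continuous $g$ vanishing on $F_i$ but arbitrarily large at an unsupported point, violating $\rho_{F_i}$-boundedness). Writing $\psi=\sum_{x\in F_i}\alpha_x\delta_x$, the HBE conditions become $\sum_x\alpha_xf_0(x)=1$ and $\sum_x|\alpha_x|=1/|f_0(z_i)|$. Since $|f_0(z_i)|$ is the \emph{strict} maximum of $|f_0|$ on $F_i$ by construction, the chain
\[
1=\Big|\sum_{x\in F_i}\alpha_xf_0(x)\Big|\leq\sum_{x\in F_i}|\alpha_x|\,|f_0(x)|\leq\Big(\sum_{x\in F_i}|\alpha_x|\Big)|f_0(z_i)|=1
\]
can hold only if $\alpha_x=0$ for every $x\neq z_i$, yielding $\psi=\widetilde{\phi}_i$.

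Since $\widetilde{\phi}_1\neq\widetilde{\phi}_2$ (they are supported on different singletons), the pairs $(\phi,\rho_{F_1})$ and $(\phi,\rho_{F_2})$ carry distinct HBEs, and Remark~\ref{R1} delivers the failure of property-SNP. The main obstacle is the uniqueness clause, which rests on the identification of $C_p(Z)^*$ with finitely supported signed measures together with the equality case of the triangle inequality under a strict-maximum condition; engineering this strictness is precisely where the non-attainment of the supremum value of $f_0$ is consumed.
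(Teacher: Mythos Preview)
Your proof is correct and follows essentially the same route as the paper's: fix the functional $\phi(cf_0)=c$ and, for any base seminorm $\rho_{F_0}$, pass to larger finite sets and exhibit distinct HBEs of $\phi$ that are (signed scalar multiples of) point evaluations at new points where $|f_0|$ strictly dominates its values on $F_0$. The one difference is that you additionally prove each $\widetilde{\phi}_i$ is the \emph{unique} HBE of its pair; this is true but not needed for Remark~\ref{R1}, which only requires \emph{some} HBE of $(\phi,\rho_{F_1})$ to differ from \emph{some} HBE of $(\phi,\rho_{F_2})$---so what you label ``the crux'' is in fact superfluous, and the paper accordingly omits it.
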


\begin{proof} Without loss of generality, we can assume that $f_0(x_0)=1$ for some $x_0\in Z$. Take any $F$=$\{x_1,x_2,...,x_n\}$ $\ci Z$. Assume that $|f_0(x_1)|\leq |f_0(x_2)|\leq ... \leq |f_0(x_n)|=a\neq 0$ (otherwise, add more points in $F$ so that $a\neq 0$). Choose now $x_{n+1}$ such that $1<|f_0(x_{n+1})|$ and $|f_0(x_{n+1})|>|f_0(x_n)|$ (this is possible as $f_0$ is not attaining its supremum). Let $F'=\{x_0,x_1,x_2,...,x_n,x_{n+1}\}$ and $|f_0(x_{n+1})|=b$. We show that the pairs $(\delta_{x_0},\rho_{F})$ and $(\delta_{x_0},\rho_{F'})$ on $Y$ have two distinct extensions $\phi$ and $\psi$, respectively, where  $\phi= sgn\big(f_0(x_n)\big)\frac{\delta_{x_n}}{a}$  and $\psi= sgn\big(f_0(x_{n+1})\big)\frac{\delta_{x_{n+1}}}{b}$. Clearly, $\phi|_Y=\psi|_Y=\delta_{x_0}$. Note that $|\delta_{x_0}(\alpha f_0)|=|\alpha f_0(x_0)|=|\alpha|\leq \frac{1}{a}$ whenever $\rho_{F}(\alpha f_0)\leq 1$. Since $\rho_F\left( \frac{f_0}{a}\right)=1$ and $|\frac{f_0}{a}(x_0)|=\frac{1}{a}$, we have $\chi_{\rho_{F}}^Y(\delta_{x_0})=\frac{1}{a}$. It is easy to prove that $\chi_{\rho_{F}}^X(\phi)=\frac{1}{a}$. Similarly, we can show that $\chi_{\rho_{F'}}^Y(\delta_{x_0})=\chi_{\rho_{F'}}^X(\psi)=\frac{1}{b}$. Which completes the proof.
\end{proof}

\begin{thm} 
Let $Y$= $span(f_0)$ be a subspace of $X=C_p(Z)$. Then $Y$ has the property-SNP with respect to $\mc{P}=\{\rho_F:F\ci Z \text{ is finite}\}$ if and only if $f_0$ attains its supremum at exactly one point, where $\rho_F(f)=\max_{x\in F}|f(x)|$ for all $f\in C(Z)$.
\end{thm}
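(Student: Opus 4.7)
My plan for this proof is as follows. Since $Y=\tr{span}(f_0)$ is one-dimensional, I will fix the functional $g_0\in Y^*$ defined by $g_0(f_0)=1$; every element of $Y^*$ is a scalar multiple of $g_0$, so it suffices to analyze HBEs of $g_0$. The key structural ingredient is the classical description of $(C_p(Z))^*$: every $\phi\in(C_p(Z))^*$ has the form $\sum_i\beta_i\delta_{y_i}$ with finitely many $y_i\in Z$, and $\chi_{\rho_F}^X(\phi)<\iy$ forces every $y_i\in F$, in which case $\chi_{\rho_F}^X(\phi)=\sum_i|\beta_i|$. A preliminary observation using the Tychonoff property of $Z$ is that $\rho_F\succeq\rho_{\{z_0\}}$ holds iff $z_0\in F$ (in the contrary case, a Urysohn-type function separating $z_0$ from the finite closed set $F$ rules out any bounding inequality); this is the only mildly delicate technical point.

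For the sufficiency direction, I assume that $|f_0|$ attains its supremum at a unique point $z_0$ and, after rescaling $f_0$, that $f_0(z_0)=\sup_{z\in Z}|f_0(z)|=1$ with $|f_0(x)|<1$ for all $x\neq z_0$. Setting $\mu=\rho_{\{z_0\}}$, for any $\rho_F\in\mc{P}_\mu$ the preliminary observation gives $z_0\in F$, and hence $\chi_{\rho_F}^Y(g_0)=1/\max_{x\in F}|f_0(x)|=1$. If $\widetilde{g}_0=\sum_i\beta_i\delta_{x_i}$ is any HBE of $(g_0,\rho_F)$, then the chain
\[
1=\sum_i\beta_i f_0(x_i)\leq\sum_i|\beta_i|\,|f_0(x_i)|\leq\sum_i|\beta_i|=1
\]
forces $|f_0(x_i)|=1$ whenever $\beta_i\neq 0$, so $x_i=z_0$. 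Hence $\widetilde{g}_0=\delta_{z_0}$, a unique extension independent of the choice of $F$, giving property-SNP.

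For necessity I argue by contrapositive, distinguishing two cases. If $|f_0|$ does not attain its supremum, Proposition~\ref{prop 1} directly yields the failure of property-SNP. Otherwise the supremum is attained at two distinct points $z_1\neq z_2$ and, after scaling, $f_0(z_1),f_0(z_2)\in\{-1,1\}$. Given any $\mu=\rho_{F_0}\in\mc{P}$, I take $F=F_0\cup\{z_1,z_2\}$ so that $\rho_F\succeq\mu$ and $\chi_{\rho_F}^Y(g_0)=1$. Then $f_0(z_1)\delta_{z_1}$ and $f_0(z_2)\delta_{z_2}$ are two distinct HBEs of $(g_0,\rho_F)$: each maps $f_0$ to $f_0(z_i)^2=1$ and has $\chi_{\rho_F}^X$-value equal to $|f_0(z_i)|=1$. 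Since no choice of $\mu$ escapes this construction, property-SNP fails, completing the argument.
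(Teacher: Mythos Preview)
Your proof is correct and follows essentially the same approach as the paper's own proof: both directions hinge on the identification of $(C_p(Z))^*$ with finite combinations of point evaluations, the formula $\chi_{\rho_F}^X\big(\sum_i\beta_i\delta_{x_i}\big)=\sum_i|\beta_i|$ when the support lies in $F$, and the invocation of Proposition~\ref{prop 1} for the non-attaining case. Your presentation is in fact a bit tighter than the paper's --- your single chain $1=\sum_i\beta_i f_0(x_i)\le\sum_i|\beta_i|\,|f_0(x_i)|\le\sum_i|\beta_i|=1$ collapses what the paper splits into two steps, and writing the two competing extensions as $f_0(z_i)\delta_{z_i}$ handles both sign possibilities at once --- but the underlying argument is the same.
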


\begin{proof} 
Suppose $f_0$ attains its supremum at exactly one point. Assume that $x_0\in Z$ such that $\sup_{x\in Z}|f_0(x)|=f_0(x_0)=1$. Since the dimension of $Y$ is one, we have $Y^*$= span$(\delta_{x_0})$, where $\delta_{x_0}(f)=f(x_0)$. It is now enough to find a finite set $F_0\ci Z$ such that every pair $(\delta_{x_0}, \rho_{F})$ has a unique extension whenever $F_0\ci F$. Let $F_0=\{x_0\}$ and let $F_0\ci F$ be finite. Then $\chi_{\rho_{F}}^X(\delta_{x_0})=1$. Suppose $\widetilde{\phi}=\sum_{i=1}^{m}\alpha_i\delta_{x_i}$ is any extension of the pair $(\delta_{x_0}, \rho_{F'})$. Observe that all $x_i\in F$ as $\chi_{\rho_F}^X(\widetilde{\phi})<\iy$. Clearly, $|\widetilde{\phi}(f)|\leq \sum_{i=1}^m|\alpha_i|$ whenever $\rho_F(f)\leq 1$. Consider now an $h\in C(Z)$ such that $\|h\|_\iy\leq 1$ and $$h(x_i)=\Big\{\begin{array}{ll} 1 & \text{ if }\alpha_i>0 \\ -1 &\text{ if }\alpha_i<0.\end{array}$$ Then $\widetilde{\phi}(h)=\sum_{i=1}^m|\alpha_i|$. Since $\chi_{\rho_F}^X(\widetilde{\phi})=\chi_{\rho_{F}}^X(\delta_{x_0})=1$, we have $\sum_{i=1}^m|\alpha_i|=1$.

{\sc Step 1:} We show that $x_0=x_i$ for some $i$. Suppose $x_i\neq x_0$ for all $i$. Then
$$\begin{aligned}
1=|f_0(x_0)|&=|\delta_{x_0}(f_0)|=|\widetilde{\phi}(f_0)|=\left|\sum_{i=1}^m\alpha_i f_0(x_i)\right|\\
&\leq \left(\max_{1\leq i\leq m}|f_0(x_i)|\right)\sum_{i=1}^m|\alpha_i|\\
&<1.1=1\text{  [since }f_0(x_i)<1\text{ for all }x_i\neq x_0].
\end{aligned}$$

{\sc Step 2:} Assume that $x_1=x_0$. Next, we show that $f_0(x_i)=0$ for all $i>1$. Note that,
\beqa
|\alpha_1|+\sum_{i=2}^m|\alpha_i|=1
&=&\left|\sum_{i=1}^m \alpha_i f_0(x_i)\right|\\
&\leq& \sum_{i=1}^m |\alpha_i||f_0(x_i)|\\
&=&|\alpha_1||f_0(x_1)|+\sum_{i=2}^m |\alpha_i||f_0(x_i)|\\
&<&|\alpha_1|+\sum_{i=2}^m|\alpha_i|\text{ [since }|f_0(x_i)|<1 \text{ for all }i\neq 1]
\eeqa

Hence, $f_0(x_i)=0$ for all $i\geq 2$. This implies $\widetilde{\phi}(f_0)=f_0(x_0)=\delta_{x_0}(f)$. Hence, $Y$ has the property-SNP.	

Conversely, if $Y$ has the property-SNP, then by Proposition \ref{prop 1}, $f_0$ must achieve its supremum. Suppose that there exist two points $x_1$ and $x_2$ such that $\|f\|_\iy=|f(x_1)|=|f(x_2)|=1$. Then the pair $(\delta_{x_1},\rho_F)$ has two distinct extensions $\delta_{x_1}$ and $\delta_{x_2}$ (or $-\delta_{x_2}$ if $f(x_2)=-f(x_1)$) for all $F\supseteq \{x_1,x_2\}$. Hence, the result holds.  
\end{proof}
 
If the dimension of $Y$ is two, then we only have a necessary condition.

\begin{Pro}\label{P5} 
Let $Y$ be a two-dimensional subspace of $X=C_p(Z)$ which has the property-SNP. Then any non-zero element of $Y$ cannot attain its supremum on more than two points. 
\end{Pro}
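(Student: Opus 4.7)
The plan is to argue by contradiction: assume some nonzero $f_0 \in Y$ attains its supremum at three distinct points $x_1,x_2,x_3 \in Z$, and manufacture a functional $\phi \in Y^*$ together with a continuum of Hahn--Banach extensions for arbitrarily refined $\rho_F$.

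\textbf{Setup and key identity.} Normalise so that $\|f_0\|_\infty = 1$ and set $\epsilon_i := f_0(x_i) \in \{-1,+1\}$. Since $\dim Y^* = 2$, the three evaluations $\delta_{x_1}|_Y,\, \delta_{x_2}|_Y,\, \delta_{x_3}|_Y$ are linearly dependent in $Y^*$: there exist scalars $(a_1,a_2,a_3)\neq 0$ with
\[
a_1 g(x_1) + a_2 g(x_2) + a_3 g(x_3) = 0 \qquad (g \in Y).
\]
Substituting $g = f_0$ gives the crucial identity $a_1 \epsilon_1 + a_2 \epsilon_2 + a_3 \epsilon_3 = 0$.

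\textbf{Producing multiple HBEs.} Define $\phi := \sum_{i=1}^3 \epsilon_i\, \delta_{x_i}|_Y \in Y^*$ with canonical extension $\widetilde\phi := \sum_{i=1}^3 \epsilon_i\, \delta_{x_i} \in X^*$. For any finite $F \supseteq \{x_1,x_2,x_3\}$ one computes $\chi_{\rho_F}^X(\widetilde\phi) = 3$, while the lower bound $\phi(f_0)/\rho_F(f_0) = 3$ pins down $\chi_{\rho_F}^Y(\phi) = 3$; hence $\widetilde\phi$ is an HBE of $(\phi,\rho_F)$. Every extension of $\phi$ supported on $\{x_1,x_2,x_3\}$ has the form $\widetilde\phi_s := \sum_i (\epsilon_i + s a_i)\delta_{x_i}$, and its $\chi_{\rho_F}^X$-value is $h(s) := \sum_i |\epsilon_i + s a_i|$, a convex piecewise-linear function whose break points $-\epsilon_i/a_i$ (for $a_i \neq 0$) are all nonzero. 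Consequently $h$ is differentiable at $0$ with $h'(0) = \sum_i a_i \epsilon_i = 0$, and convexity forces $h \equiv 3$ on some open interval about $0$. Any two distinct values of $s$ in that interval yield distinct HBEs $\widetilde\phi_s$ of $(\phi,\rho_F)$, where distinctness uses linear independence of $\delta_{x_1},\delta_{x_2},\delta_{x_3}$ in $X^*$, guaranteed by complete regularity of $Z$.

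\textbf{Conclusion.} Given any $\mu = \rho_G \in \mc{P}$, the refinement $F := G \cup \{x_1,x_2,x_3\}$ satisfies $\rho_F \succeq \mu$ and $(\phi,\rho_F)$ has multiple HBEs; by Remark~\ref{R1} this contradicts property-SNP of $Y$. The main technical point is the sign alignment: taking $\widetilde\phi$ to have coefficients exactly $\epsilon_i = f_0(x_i)$ is what makes the directional derivative $h'(0)$ coincide with the identity obtained by evaluating the dependence relation at $f_0$; once this matching is in place, the non-uniqueness follows from routine piecewise-linear convex analysis.
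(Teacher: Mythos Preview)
Your proof is correct and takes a genuinely different route from the paper's. The paper argues by case analysis on the signs of $f_1(x_i)$ and $f_2(x_i)$ (with $Y=\mathrm{span}\{f_1,f_2\}$): when two of the restricted evaluations $\delta_{x_i}|_Y$ agree up to sign, non-uniqueness of the HBE of $(\delta_{x_1}|_Y,\rho_F)$ is immediate; in the remaining cases $\delta_{x_1}|_Y,\delta_{x_2}|_Y$ form a basis of $Y^*$, so $\delta_{x_3}|_Y=a\delta_{x_1}|_Y+b\delta_{x_2}|_Y$, and the paper exhibits $\delta_{x_3}$ and $a\delta_{x_1}+b\delta_{x_2}$ as two HBEs of $(\delta_{x_3}|_Y,\rho_F)$. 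Your approach instead constructs a single functional $\phi=\sum_i\epsilon_i\delta_{x_i}|_Y$ whose coefficients are aligned with the signs $\epsilon_i=f_0(x_i)$, and obtains an entire segment of HBEs $\widetilde\phi_s$ from the piecewise-linear convexity observation that $h(s)=\sum_i|\epsilon_i+sa_i|$ has $h'(0)=\sum_i a_i\epsilon_i=0$ and hence is locally constant. This is more uniform: no case splitting is required, and the seminorm matching $\chi_{\rho_F}^X(\widetilde\phi_s)=3=\chi_{\rho_F}^Y(\phi)$ is verified in one stroke for all small $s$, whereas the paper in its ``linearly independent'' branch does not explicitly check that $\chi_{\rho_F}^X(a\delta_{x_1}+b\delta_{x_2})=|a|+|b|$ equals $1$. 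The paper's argument has the advantage of producing the competing HBEs as explicit point evaluations (or simple combinations thereof), which is more concrete than extracting them from a derivative computation.
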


\begin{proof} Let $Y$= span$\{f_1,f_2\}$, where $f_1,f_2\in C_p(Z)$. Also, assume that there exists $x_1,x_2\text{ and }x_3\in Z$ with $|f_1(x_1)|=|f_1(x_2)|=|f_1(x_3)|=\sup\limits_{x\in Z}|f_1(x)|=1$. 

{\sc Case 1:~} Suppose that $|f_2(x_1)|=|f_2(x_2)|$. 

{\sc Case 1.1:~} When $f_1(x_1)=f_1(x_2)$ and $f_2(x_1)=f_2(x_2)$. 

Then for any $F\supseteq\{x_1,x_2\}$, we have $\delta_{x_1}$ and $\delta_{x_2}$ two distinct extensions of $(\delta_{x_1},\rho_F)$. Hence, this case cannot occur.

{\sc Case 1.2:~} When $f_1(x_1)=-f_1(x_2)$ and $f_2(x_1)=-f_2(x_2)$. 

Then for any $F\supseteq\{x_1,x_2\}$, we have $\delta_{x_1}$ and $-\delta_{x_2}$ two distinct extensions of $(\delta_{x_1},\rho_F)$. Hence, this case cannot occur.

{\sc Case 1.3:~} When $f_1(x_1)=-f_1(x_2)$ and $f_2(x_1)=f_2(x_2)$. 

This condition ensures that, $\delta_{x_1}$ and $\delta_{x_2}$ are linearly independent. 

{\sc Case 1.4:~} When $f_1(x_1)=f_1(x_2)$ and $f_2(x_1)=-f_2(x_2)$.

This condition ensures that, $\delta_{x_1}$ and $\delta_{x_2}$ are linearly independent.

{\sc Case 2:~} Suppose that $|f_2(x_1)|\neq |f_2(x_2)|$. 

This condition ensures that, $\de_{x_1}$ and $\de_{x_2}$ are linearly independent. 

It now remains to prove that if $\delta_{x_1}$ and $\delta_{x_2}$ are linearly independent, then $Y$ does not have the property-SNP. If this holds, then $Y^*$= span$(\delta_{x_1},\delta_{x_2})$. So, there exists $a,b\neq 0$ such that $\delta_{x_3}=a\delta_{x_1}+b\delta_{x_2}$. Now, for any $F\supseteq \{x_1,x_2,x_3\}$, $\delta_{x_3}$ and $a\delta_{x_1}+b\delta_{x_2}$ are two distinct extensions of $(\delta_{x_3},\rho_F)$. Hence, the result follows.\end{proof}

We do not know whether an analogous result to Proposition~\ref{P5} is true if $Y$ is a finite co-dimensional subspace of $X$. In \cite[Theorem~3.2]{RP}, Phelps derived the similar result in Banach spaces, we need suitable techniques to implement these ideas in locally convex spaces.

\section*{Acknowledgement}
The authors thank Prof. T.S.S.R.K. Rao for drawing our attention towards the articles [R. P. Agnew, A. P. Morse, {\it Extensions of linear functionals, with applications to limits, integrals, measures, and densities}, Ann. of Math.(2), 39(1) (1938), 20--30] and [P. Bandyopadhyay, A. K. Roy, {\it Nested sequences of balls, uniqueness of Hahn-Banach extensions and the Vlasov property}, Rocky Mountain J. Math., 33(1) (2003), 27--67], which is in fact a starting point of this investigation. 

The authors declare that there is no conflict of interest.

\Addresses
\end{document}